\numberwithin{equation}{section}
\def\@cite#1#2{{\m@th\upshape\bfseries%
[{#1\if@tempswa{\m@th\upshape\mdseries, #2}\fi}]}}
\theoremstyle{plain}
\newtheorem{theorem}{Theorem}[section]
\newtheorem{corollary}[theorem]{Corollary}
\newtheorem{proposition}[theorem]{Proposition}
\theoremstyle{definition}
\newtheorem{definition}[theorem]{Definition}
\newtheorem{example}[theorem]{Example}
\newtheorem{remark}[theorem]{Remark}
\newtheorem*{acknow}{Acknowledgements}
\newtheorem*{algo}{Follower Set Graph Algorithm}
\theoremstyle{remark}
\renewcommand{\qedsymbol}{{\vrule height5pt width5pt depth1pt}}
  \newcommand{\A}{{\mathcal{A}}}
  \newcommand{\B}{{\mathcal{B}}}
  \newcommand{\F}{{\mathcal{F}}}
  \newcommand{\K}{{\mathcal{K}}}
\renewcommand{\L}{{\mathcal{L}}}
  \newcommand{\T}{{\mathcal{T}}}
  \newcommand{\U}{{\mathcal{U}}}
\def\al{\alpha}
\def\ga{\gamma}
\def\ze{\zeta}
\def\La{\Lambda}
\def\om{\omega}
\def\Om{\Omega}
\def\si{\sigma}
\def\Si{\Sigma}
\newcommand\vth{\vartheta}
\newcommand\vpi{\varphi}
\newcommand{\bC}{\mathbb{C}}
\newcommand{\bF}{\mathbb{F}}
\newcommand{\bN}{\mathbb{N}}
\newcommand{\bZ}{\mathbb{Z}}
\newcommand{\fF}{{\mathfrak{F}}}
\newcommand{\fG}{{\mathfrak{G}}}
\newcommand{\fH}{{\mathfrak{H}}}
\newcommand{\fL}{{\mathfrak{L}}}
\newcommand{\foral}{\text{ for all }}
\newcommand{\qand}{\quad\text{and}\quad}
\newcommand{\qiff}{\quad\text{if and only if}\quad}
\newcommand{\qfor}{\quad\text{for}\ }
\newcommand{\ca}{\mathrm{C}^*}
\newcommand{\ol}{\overline}
\newcommand{\wt}{\widetilde}
\newcommand{\alg}{\operatorname{alg}}
\newcommand{\mt}{\emptyset}
\newcommand{\spn}{\operatorname{span}}
\newcommand{\supp}{\operatorname{supp}}
\begin{document}


\title[On the Quantized Dynamics of Languages]{On the Quantized Dynamics of Factorial Languages}

\author[C. Barrett]{Christopher Barrett}
\address{School of Mathematics and Statistics\\ Newcastle University\\ Newcastle upon Tyne\\ NE1 7RU\\ UK}
\email{c.barrett2@newcastle.ac.uk}

\author[E.T.A. Kakariadis]{Evgenios T.A. Kakariadis}
\address{School of Mathematics and Statistics\\ Newcastle University\\ Newcastle upon Tyne\\ NE1 7RU\\ UK}
\email{evgenios.kakariadis@ncl.ac.uk}

\thanks{2010 {\it  Mathematics Subject Classification.}
58F03, 47L75, 46L55, 54H20}

\thanks{{\it Key words and phrases:} Sofic subshift, follower set graph, partially defined dynamical system.}

\maketitle

\begin{abstract}
We study local piecewise conjugacy of the quantized dynamics arising from factorial languages.
We show that it induces a bijection between allowable words of same length and thus it preserves entropy.
In the case of sofic factorial languages we prove that local piecewise conjugacy translates to unlabeled graph isomorphism of the follower set graphs.
Moreover it induces an unlabeled graph isomorphism between the Fischer covers of irreducible subshifts.
We verify that local piecewise conjugacy does not preserve finite type nor irreducibility; but it preserves soficity.
Moreover it implies identification (up to a permutation) for factorial languages of type $1$ if, and only if, the follower set function is one-to-one on the symbol set.
\end{abstract}

\section{Introduction}

The fruitful interplay between Symbolic Dynamics and Operator Algebras was established in the seminal paper of Cuntz-Krieger \cite{CK80}.
Following their work, Matsumoto introduced an effective way for associating operators to subshifts and forming Cuntz-Krieger-type C*-algebras \cite{Mat97} that were further examined in-depth in a series of papers \cite{Mat98, Mat99, Mat02}.
This theory was revisited with Carlsen \cite{Car08, CM04} and a new view was exploited in more generality in \cite{Mat02}.
These important works clarified a strong connection between intrinsic properties of subshifts with related C*-algebras.

Matsumoto operators follow from a Fock representation that accommodates more structures.
Shalit-Solel \cite{SS09} provided such a context for homogeneous ideals in general and established a rigidity programme for the related (nonselfadjoint) tensor algebras.
The origins of this framework are traced back to the seminal work of Arveson \cite{Arv67}.
Since then, a number of rigidity results have appeared in the literature for tensor algebras of graphs or dynamical systems as in the work of Katsoulis-Kribs \cite{KK04} and Solel \cite{Sol04},  Davidson-Katsoulis \cite{DK08, DK11} that supersedes the work of previous authors \cite{AJ69, HH77, Pet84, Pow92}, Davidson-Roydor \cite{DR11}, Davidson-Ramsey-Shalit \cite{DRS11, DRS15}, Dor-On \cite{Dor15}, Katsoulis-Ramsey \cite{KR16}, and the work of the second author with Davidson \cite{DK12} and Katsoulis \cite{KK14}.
In this endeavour Davidson-Katsoulis \cite{DK11} developed the notion of piecewise conjugacy for classical systems as the essential level of equivalence obtained from tensor algebras. 
Piecewise conjugacy allows for comparisons of the systems locally and thus is more tractable than (global) conjugacy.
Recently it found significant applications to Number Theory and reconstruction of graphs as exhibited in the work of Cornelissen-Marcolli \cite{CM11, CM13} and Cornelissen \cite{Cor12}.

Along this line of research, the second author with Shalit examined tensor algebras of factorial languages in \cite{KS15}.
A factorial language $\La^*$ on $d$ symbols is a subset of the free semigroup $\bF_+^d$ such that if $\mu \in \La^*$ then every subword of $\mu$ is also in $\La^*$.
To fix notation, the operators $T_\mu$ in discussion act on $\ell^2(\La^*)$ and are defined by
\[
T_\mu e_\nu
:=
\begin{cases}
e_{\mu \nu} & \text{ if } \mu \nu \in \La^*,\\
0 & \text{ otherwise},
\end{cases}
\]
for $\mu \in \La^*$.
As an intermediate step we use the C*-algebra of \emph{checkers}
\[
A := \ca(T_\mu^* T_\mu \mid \mu \in \La^*).
\]
The $*$-endomorphisms on $A$ given by $\al_i(a) := T_i^* a T_i$ play an important role in the analysis and the system $(A,\al_1, \dots, \al_d)$ was coined in \cite{KS15} as \emph{the quantized dynamics of $\La^*$}.
Two norm-closed subalgebras of $\B(\ell^2(\La^*))$ can be related to the same language $\La^*$:
\begin{enumerate}[\, I.]
\item The \emph{$\A$-tensor algebra} $\A_{\La^*} := \ol{\alg}\{I, T_\mu \mid \mu \in \La^*\} $ in the sense of Shalit-Solel \cite{SS09};
\item The \emph{$\T^+$-tensor algebra} $\T_{\La^*}^+ := \ol{\alg}\{a, T_\mu \mid a \in A, \mu \in \La^*\}$ in the sense of Muhly-Solel \cite{MS98}.
\end{enumerate}
Matsumoto's C*-algebra is the quotient of $\ca(T):= \ca(T_\mu \mid \mu \in \La^*)$ by the compacts $\K$ in $\ell^2(\La^*)$.

Arveson's Programme on the C*-envelope\footnote{
\ Arveson's Programme was initiated in \cite{Arv69} and established in \cite{Ham79}.
See also its formulation in \cite{KP13}.}
provides a solid pathway for researching possible Cuntz-Krieger-type C*-algebras.
For example the natural analogues related to C*-correspondences are exactly the C*-envelopes of the tensor algebras as proven by Katsoulis-Kribs \cite{KK06}.
One of the main results in \cite{KS15} states that the C*-algebra that fits Arveson's Programme for both $\A_{\La^*}$ and $\T_{\La^*}^+$ is the quotient of $\ca(T)$ by the generalized compacts; rather than quotienting by all compacts as is done in Matsumoto's work.
In fact the quantized dynamics trigger a dichotomy: the C*-envelope of both $\A_{\La^*}$ and $\T_{\La^*}^+$ is either the quotient by all compacts or it coincides with $\ca(T)$, depending on whether the quantized dynamics is injective or not.
This is in full analogy to what holds for graph C*-algebras where sinks or vertices emitting infinite edges are excluded from the Cuntz-Krieger relations.

Apart from being a starting point for Cuntz-Krieger-type C*-algebras \emph{via} the C*-envelope machinery, both $\A_{\La^*}$ and $\T_{\La^*}^+$ are rigid for factorial languages.
It is shown in \cite{KS15} that they encode the factorial language, yet in two essentially different ways:
\begin{enumerate}[\, (i)]
\item The $\A$-tensor algebras provide a complete invariant for the factorial languages up to a permutation of the symbols.
\item The $\T^+$-tensor algebras provide a complete invariant for local piecewise conjugacy (l.p.c.) of the quantized dynamics.
\end{enumerate}
However it was left open how l.p.c. reflects the initial data:
\begin{enumerate}[$\quad$ (a)]
\item[(a)] How is l.p.c. interpreted in terms of factorial languages?
\item[(b)] What properties are (thus) preserved under l.p.c.?
\item[(c)] What is the impact on sofic factorial languages?
\end{enumerate}
In the current paper we answer these questions that add on the impact of the rigidity results of \cite{KS15}.

Before we move to the description of our results we stress that languages of subshifts form special examples of factorial languages and several constructions apply to this broader context.
Thus terminology related to subshifts is extended accordingly to cover general factorial languages, when possible.
Unlike to Carlsen \cite{Car08}, Matsumoto \cite{Mat97} or Krieger \cite{Kri84}, our study is based on the allowable words rather than the points of an induced subshift.
In fact the dynamical system of the backward shift is not explicitly used for the Fock space quantization and thus no connections between l.p.c. and topological conjugacy arise.
The results and examples herein show that they are incomparable.
On one hand l.p.c. requires for the languages to have the same number of symbols (Definition \ref{D: QPloc}) and so it is not implied by topological conjugacy.
On the other hand in Example \ref{E: even vs sft} we construct a subshift of finite type that is l.p.c. to the even shift, and so l.p.c. does not imply topological conjugacy.

We begin by giving an updated picture of the quantized dynamics (Section \ref{Ss: qd}).
We then show that l.p.c. implies a bijection between allowable words of the same length, and thus it respects entropy (Proposition \ref{P: same size}, Corollary \ref{C: entropy}).
The flexibility of l.p.c. can be seen in the form of this bijection; but has its limitations (see Remark \ref{R: fn}).
Most notably l.p.c. does not preserve finite type as shown in Example \ref{E: even vs sft}, nor irreducibility as shown in Example \ref{E: even vs sft irr}.
Consequently l.p.c. does not preserve the zeta function (Remark \ref{R:zeta fun 2sided}).

Nevertheless l.p.c. respects soficity (i.e. the C*-algebra of checkers is finite dimensional) where the theory is rich.
There is a well known construction of a labeled finite graph, i.e. \emph{the follower set graph}, that gives a presentation of a sofic factorial language.
When the language is of finite type then this construction can be described by a (terminating) algorithm (Follower Set Graph Algorithm).
Labeled graph isomorphism is equivalent then with the factorial languages being the same up to a permutation of symbols (Proposition \ref{P: 1 block code}).
On the other hand it is the unlabeled graph isomorphism that coincides with l.p.c. (Theorem \ref{T:unlbl iso}).
Combining these results with \cite{KS15} we thus prove the following diagram for two sofic factorial languages $\La^*$ and $M^*$:
\[
\xymatrix@C=20pt@R=20pt{
\A_{\La^*} \simeq \A_{M^*} \ar@{=>}[d] \ar@{<=>}[r] & \text{the labeled f.s.g. of $\La^*$ and $M^*$ are isomorphic.} \ar@{=>}[d] \\
\T_{\La^*}^+ \simeq \T_{M^*}^+ \ar@{<=>}[r] & \text{the unlabeled f.s.g. of $\La^*$ and $M^*$ are isomorphic.}
}
\]

The follower set graph construction is rather useful in Theoretical Computer Science as the starting point for computing minimal presentations.
Such presentations are unique (up to isomorphism) for irreducible two-sided subshifts and are better known as Fischer covers \cite{Fis75-1, Fis75-2}.
We show that l.p.c. induces an unlabeled graph isomorphism between the Fischer covers of irreducible two-sided subshifts (Corollary \ref{C: irr}).
It is quite interesting to notice though that we achieve these results without inducing a bijection between intrinsically synchronizing words.
A weaker bijection between the collections of follower sets of such words is induced.
This is depicted in Remark \ref{R: intr mix} where we show the limitations of our arguments.
The same obstructions do not allow to apply our arguments and prove (or disprove) that l.p.c. respects the mixing property.

We further investigate cases where the vertical directions in the diagram above can or cannot be equivalences based on the type and the number of symbols.
As commented in \cite{KS15} these arrows cannot be reversed in general and we extend this remark for two-sided subshifts in Example \ref{E:not same sft}.
The key in these counterexamples is that the follower set function is not one-to-one.
Apparently this is the only obstruction for type $1$ factorial languages.
In Theorem \ref{T: same graph iso} we show that unlabeled isomorphism for type $1$ factorial languages produces a labeled isomorphism if and only if the follower set function is one-to-one on the symbol set.
Consequently then, isomorphism of the $\A$-tensor algebras is equivalent to isomorphism of the $\T^+$-tensor algebras.
This condition is satisfied by edge shifts with invertible adjacency matrix.
Injectivity of the follower set function is not required for type 1 factorial languages on two symbols.
In Remark \ref{R: sft 1 symb 2} we describe how an unlabeled graph isomorphism implies a labeled graph isomorphism of the follower set graphs in these cases.
These results depend on the low complexity of the system.
However this does not hold when passing to type $2$ factorial languages, even when the number of symbols is small (Examples \ref{E:counter1} and \ref{E:counter2}).

To facilitate comparisons, we developed a program that takes as an input a set of forbidden words on two symbols and gives the follower set graph as an output\footnote{\
We chose to develop this program for the right version of factorial languages, rather than the left we work with here, as it is accustomed in Theoretical Computer Science to concatenate on the right.
Nevertheless, the left version follows easily by reversing the words in the input and the output.}.
The code and the .exe file can be downloaded from the official webpage of the second author, currently at
\begin{center}
\href{http://www.mas.ncl.ac.uk/\~nek29}{\texttt{http://www.mas.ncl.ac.uk/$\sim$nek29}}.
\end{center}
We remark that our principal objective here was to construct a program that computes easily follower set graphs as a check for our examples and counterexamples.
Hence we were not (extremely) concerned about complexity or the required memory, but only about the fact that it terminates when the type is finite.

\begin{acknow}
This project started as a continuation of \cite{KS15} with Orr Shalit.
Following his suggestion it was decided for the paper to go with just two authors.
We thank Orr for the numerous comments and corrections on earlier drafts of the paper.

We also thank Vissarion Fisikopoulos for his valuable feedback and corrections on earlier versions of the algorithm and the program that computes the follower set graphs.
A friend in need is a friend indeed.

This paper is part of the MMath project of the first author.
\end{acknow}

\section{Preliminaries}

\subsection{Languages and subshifts}\label{Ss: subshifts}

Let us fix the terminology and notation we will be using throughout the paper.
To this end $\bF_+^d$ denotes the free semigroup on the symbol set $\Sigma :=\{1, \dots, d\}$ with multiplication given by concatenation.
For $\mu = \mu_k \dots \mu_1$ in $\bF_+^d$ we write $|\mu| := k$ for the length of $\mu$.
The empty word $\mt$ is by default in $\bF_+^d$, it has zero length and plays the role of the unit.

For $\mu, \nu \in \bF_+^d$ we say that $\nu$ is a \emph{subword} of $\mu$ if there are $w,z \in \bF_+^d$ such that $\mu = w \nu z$.
By default the empty word is a subword of every $\mu \in \La^*$.
A \emph{(factorial) language} is a subset $\La^*$ of $\bF^d_+$ that satisfies the following property:
\begin{center}
if $\mu \in \La^*$ then every subword of $\mu$ is in $\La^*$.
\end{center}
We will simply use the term ``language'' since we are going to encounter just factorial ones.
Without loss of generality we will always assume that all letters of the symbol set are in $\La^*$.
Otherwise we view $\La^*$ to be defined on less symbols, i.e. on $\Si$ out the symbols that are not in $\La^*$.

Examples of languages arise from subshifts and below we give a brief description.
Several elements from the theory of subshifts apply to the broader context of languages and terminology will be extended to cover languages in general.
Apart from two-sided we also consider one-sided subshifts.
Several results that hold for the two sided version hold also for the one-sided with almost the same proof.
We will mainly discuss left subshifts, but similar comments hold for the right subshifts.
In order to make sense of the one-sided subshifts and avoid technicalities we make the following convention.
We will write the sequences $x = (x_n) \in \Si^{\bZ_+}$ from right to left, i.e.
\[
\dots x_n \dots x_1 x_0. = x
\]
and likewise for elements in $\Sigma^{\bZ}$.
This is to comply with operator composition which comes by multiplying on the left.

We endow $\Si^{\bZ_+}$ with the product topology and we fix $\si \colon \Si^{\bZ_+} \to \Si^{\bZ_+}$ be the backward shift with $\si((x_i))_k = x_{k+1}$.
With our convention, the map $\si$ shifts to the right.
The pair $(\La, \si)$ is called a \emph{left subshift} if $\La$ is a closed subset of $\Si^{\bZ_+}$ with $\si(\La) \subseteq \La$.
Similarly the pair $(\La, \si)$ is called a \emph{two-sided subshift} if $\La$ is a closed subset of $\Si^{\bZ}$ with $\si(\La) = \La$.

We write $x_{[m+n-1, m]}$ for the block $x_{m+n-1} \dots x_m$ in $x \in \La$.
A word $\mu = \mu_{|\mu|} \dots \mu_1$ is said to \emph{occur} in some (one-sided or two-sided) sequence $x$ if there is an $m$ such that
\[
x_{|\mu| - 1 + m} = \mu_{|\mu|}, \dots, x_{m} = \mu_1.
\]
If a word occurs in some point of $\La$ then it is called \emph{allowable}.
The \emph{language of a subshift $\La$} is defined by
\[
\La^* := \{w \in \bF_+^d \mid w \text{ occurs in some } x \in \La\}.
\]
Since $\La$ is $\si$-invariant we have that for every allowable word $\mu$ there exists an $x \in \La$ such that $x_{[|\mu|, 0]} = \mu$.
We write $\B_n(\La^*)$ for the allowable words of length $n$ in $\La^*$.
By following the same arguments as in the two-sided subshifts one can show that if $\La$ defines a left subshift then $\La^*$ is a language such that:
\begin{center}
for every $\mu \in \La^*$ there is a $\mt \neq \nu \in \La^*$ such that $\nu \mu \in \La^*$.
\end{center}
Conversely every language with this property defines uniquely a left subshift (see \cite[Proposition 12.3]{SS09} and \cite{KS15}).

Subshifts can be described also in terms of forbidden words.
Let $\fF$ be a set of words on the symbol set $\Si = \{1, \dots, d\}$, and let
\[
\La_\fF : = \{(x_n) \in \Si^{\bZ_+} \mid \, \text{no $\mu \in \fF$ occurs in $(x_n)$} \, \}.
\]
It is known that all two-sided subshifts arise in this way.
Likewise this also holds for one-sided subshifts.
By setting
\[
\fF_k := \{ \mu \in \fF \mid \, \text{$\mu$ does not occur in $(x_n) \in \La$}, |\mu| \leq k \, \}
\]
we see that $\fF_k \subseteq \fF_{k+1}$ and $\fF = \bigcup_k \fF_k$.
Then we have that $\La = \cap_k \La_{\fF_k}$, where the intersection is considered inside the full shift space on $\Si$.
The elements in $\fF$ are called \emph{the forbidden words of the subshift}.
We will call a forbidden word \emph{minimal}, if all of its proper subwords are allowable.

Every set $\fF$ of forbidden words admits a unique basis $F \subseteq \fF$ in the sense that for every $\mu \in \fF$ there are (unique) $\nu, w \in \bF_+^d$ and a $\mu' \in F$ such that $\mu = \nu \mu' w$ and $\mu'$ is minimal.
We say that $\La$ is a \emph{subshift of finite type} (SFT) if the longest word in the basis of $\fF$ has finite length.
We say that an SFT is of \emph{type $k$} if the longest word in the basis of $\fF$ has length $k + 1$.
Hence if $\La$ is of type $k$ then any forbidden word of length strictly greater than $k + 1$ cannot be minimal.

The notions of forbidden words, minimality, basis and type pass naturally to any language.
For example, given a set $\fF$ we can define a language by 
\[
\La^*_{\fF} = \bF_+^d \setminus \{w \mu \nu \mid \mu \in \fF, w, \nu \in \bF_+^d\}.
\]
Not every language is a language of a subshift but it can be embedded in one by augmenting the symbol set.
Suppose that $\La^*$ is defined through a set of forbidden words $\fF$ in $\Si$.
We introduce a new distinguished symbol $\ze$ and take the symbol set $\wt{\Si} = \{1, \dots, d, \ze\}$.
Then \emph{the augmented subshift $(\wt{\La}, \si)$ of $\La^*$} is the subshift defined by $\fF$ in $\wt{\Si}^{\bZ}$.
Since $\ze$ is not contained in any word in $\fF$ it follows that the language of $\wt{\La}$ is 
\[
\wt{\La}^* = \{\ze^{n_k} \mu_k \dots \mu_2 \ze^{n_2} \mu_1 \ze^{n_1} \mid n_1, \dots, n_k \in \bZ_+, \mu_1, \dots, \mu_k \in \La^*\}
\]
and thus contains $\La^*$.

Recall that a two-sided subshift $(\La,\si)$ is called \emph{sofic} if the number of classes in $\La^*$ with respect to the equivalence relation
\[
\mu \sim \nu \Leftrightarrow \{w \in \La^* \mid w \mu \in \La^*\} = \{w \in \La^* \mid w \nu \in \La^*\}
\]
is finite.
Equivalently, if $(\La, \si)$ is a factor of an SFT \cite{Fis75-1, Wei73}.
The reader is addressed for example to \cite[Theorem 3.2.10]{LM95} for a modern treatment of sofic subshifts.
For languages that do not come from subshifts we will use the definition of soficity in terms of the equivalence classes.
It is shown in \cite{KS15} that a language $\La^*$ is sofic (resp. of finite type) if and only if its augmented subshift $(\wt{\La},\si)$ is sofic (resp. of finite type).
This follows by observing that $\mt \sim \ze \mu$ for every $\mu \in \wt{\La}^*$.

Every two-sided subshift becomes a compact metric space.
Taking the one-sided subshifts to be closed yields the same result in our case.
Therefore every sequence in a subshift has a converging subsequence.
This often appears in \cite{LM95} as the Cantor's diagonal argument, mainly because metric spaces come later in the presentation of \cite{LM95}.
We preserve this terminology to keep connections with Symbolic Dynamics.
However it is interesting that this argument works to build the one-sided subshift from a set of predetermined forbidden words.
The key is that the one-sided full shift is compact and metrizable with the topology given by
\[
\rho(x,y) =
\begin{cases}
2 & \text{ if $x_0 \neq y_0$,}\\
2^{-k} & \text{ if $x \neq y$ and $k$ is maximal so that $x_{[k,0]} = y_{[k,0]}$,}\\
0 & \text{ if $x = y$}.
\end{cases}
\]

\subsection{Fock representation}

We will require some basic theory from Hilbert spaces to show how the quantized dynamics arise from a language.
The reader who is not familiar with operator theory may read this subsection in combination with Section \ref{Ss: qd} where explicit identifications in terms of topological spaces are provided.

Operator algebras associated to subshifts were introduced by Matsumoto \cite{Mat97}.
Let $\La^*$ be a language on $d$ symbols.
Let $H = \ell^2(\La^*)$ and fix the operators $T_i$ such that $T_i e_\mu = e_{i\mu}$ if $i\mu \in \La^*$ or zero otherwise.
We fix
\[
\ca(T):=\ca(I, T_i \mid i=1, \dots, d).
\]
It is convenient to write $T_\mu T_\nu = T_{\mu\nu}$ even when $T_{\mu\nu} = 0$, i.e.  $T_{\mu\nu} = 0$ if and only if $\mu \nu \notin \La^*$.
We will also write $T_\mt = I$.
Likewise we write $e_{\mu \nu} = 0$ in $\ell^2(\La^*)$ when $\mu\nu \notin \La^*$.
The operators $T_\mu$ satisfy a list of properties:
\begin{enumerate}
\item $T_\mu^*T_\mu$ is an orthogonal projection on $\ol{\spn}\{e_\nu \mid \mu\nu \in\La^*\}$.
\item $T_\nu T_\nu^*$ is an orthogonal projection on $\ol{\spn}\{e_{\nu\mu} \mid \mu \in\La^*\}$.
\item If $|\mu| = |\nu|$, then $T_\mu^* T_\nu = 0$ if and only if $\mu \neq \nu$.
\item $T_\mu^*T_\mu$ commutes with $T_\nu^* T_\nu$, and with $T_\nu T_\nu^*$.
\item $T_\mu^* T_\mu \cdot T_i = T_i \cdot T_{\mu i}^* T_{\mu i}$ for all $i = 1, \dots, d$.
\item $\sum_{i=1}^d T_i T_i^* + P_\mt = I$ where $P_\mt$ is the projection on $\bC e_\mt$.
\item The rank one operator $e_\nu \mapsto e_\mu$ equals $T_\mu P_\mt T_\nu^*$.
\item The ideal $\K(\ell^2(\La^*))$ of compact operators is in $\ca(T)$.
\end{enumerate}

In \cite{KS15} the second author with Shalit examine several operator algebras related to the operators $T_i$.
Among them there are two classes of nonselfadjoint operator algebras:
\begin{enumerate}
\item The tensor algebra $\A_{\La^*}$ in the sense of Shalit-Solel \cite{SS09} is defined as the norm-closed subalgebra of $\B(\ell^2(\La^*))$ generated by $I$ and the $T_i$ for $i=1, \dots, d$.
\item The tensor algebra $\T^+_{\La^*}$ in the sense of Muhly-Solel \cite{MS98} is defined as the norm-closed subalgebra of $\B(\ell^2(\La^*))$ generated by $I$, the $T_i$ for $i=1, \dots, d$, and the $T_\mu^* T_\mu$ for $\mu \in \La^*$.
\end{enumerate}
The relations above imply that
\[
\A_{\La^*} = \ol{\spn}\{T_\mu \mid \mu \in \La^*\}
\qand
\T^+_{\La^*} = \ol{\spn}\{T_\mu a \mid \mu \in \La^*, a \in A\}
\]
for the unital C*-subalgebra $A:=\ca(T_\mu^* T_\mu \mid \mu \in \La^*)$ of $\ca(T)$.

\subsection{$Q$-projections}

We will be using the projections generated by the $T_i^*T_i$.
To this end we introduce the following enumeration.
Write all numbers from $0$ to $2^d-1$ by using $2$ as a base, but in reverse order.
Hence we write $[m]_2 \equiv [m] = [m_1 m_2 \dots m_d]$ so that $2=[0100\dots 0]$.
Let the (not necessarily one-to-one) assignment
\[
[m_1 \dots m_d] \mapsto Q_{[m_1 \dots m_d]} : = \prod_{m_i = 1} T_{i}^* T_{i} \cdot \prod_{m_i = 0} (I-T_{i}^*T_{i}),
\]
where $I \in \B(\ell^2(\La^*))$.
For example we write
\[
Q_{0} = Q_{[0\dots 0]} = \prod_{i=1}^d (I - T_i^*T_i) \qand Q_{2^d-1} = Q_{[1\dots 1]} = \prod_{i=1}^d T_i^*T_i.
\]
The $Q_{[m]}$ are the minimal projections in the C*-subalgebra $\ca(I, T_i^*T_i \mid i=1, \dots, d)$ of $\ca(T)$.
Consequently we obtain $\sum_{[m]=0}^{2^d -1} Q_{[m]} = I$.
The second author with Shalit have shown in \cite{KS15} the stronger equality (notice that we sum for $[m] \geq 1$ here)
\[
T_\mu^* T_\mu = T_\mu^* T_\mu \cdot \sum_{[m] = 1}^{2^d-1} Q_{[m]} = \sum_{[m] = 1}^{2^d-1} Q_{[m]} \cdot T_\mu^* T_\mu
\]
for all $\mt \neq \mu \in \La^*$.
It follows that the unit of $\ca(T_i^*T_i \mid i=1, \dots, d)$ coincides with $I \in \B(\ell^2(\La^*))$ if and only if $\La^*$ induces a left subshift \cite[Lemma 4.4]{KS15}.

\section{The quantized dynamics on the allowable words}\label{Ss: qd}

Let $\La^*$ be a language on $d$ symbols.
We fix once and for all the unital C*-subalgebra
\[
A:=\ca(T_\mu^* T_\mu \mid \mu \in \La^*)
\]
of $\ca(T)$.
Then $A = \ol{\cup_l A_l}$ is a unital commutative AF algebra for
\[
A_l := \ca(T_\mu^*T_\mu \mid \mu \in \B_l(\La^*)).
\]
The C*-algebra $A$ can be characterized by using $\La^*$.
For $l \geq 0$ let $\sim_l$ be the equivalence relation on $\La^*$ given by the rule
\[
\mu \sim_l \nu \Leftrightarrow \{w \in \B_l(\La^*) \mid w\mu \in \La^*\} = \{w \in \B_l(\La^*) \mid w \nu \in \La^* \}.
\]
Let the discrete space $\Om_l = \La^*/ \sim_l$ and write $[\mu]_l$ for the points in $\Om_l$.
Every $\mu \in \La^*$ splits $\B_l(\La^*)$ into the set of the $w_i \in \B_l(\La^*)$ for which $w_i \mu \in \La^*$ and its complement.
There is a finite number of such splittings since $\B_l(\La^*)$ is finite.
They completely identify single points in $\Om_l$, and hence $\Om_l$ is a (discrete) finite space.
Furthermore the mapping
\[
\vth \colon \Om_{l+1} \to \Om_l: [\mu]_{l+1} \mapsto [\mu]_l
\]
is a well defined (continuous) and onto map.
We can then form the projective limit $\Om$ by the directed sequence
\[
\xymatrix{
\Om_0 & \Om_1 \ar[l]_\vth & \Om_2 \ar[l]_\vth & \dots \ar[l]_\vth & \ar[l]  \Om
}
\]
for which we obtain the following identification.
In \cite{KS15} the second author with Shalit have shown that
\[
A_l \simeq C(\Om_l) \qand A \simeq C(\Om).
\]
We write $\Om_{\La^*}$ for $\Om$ when we want to highlight the language $\La^*$ to which $\Om$ is related.
From now on we will tacitly identify $A_l$ with $C(\Om_l)$ and $A$ with $C(\Om)$.

In the sequel we will use the same notation $Q_{[m]}$ for the subspaces of $\Om$ that correspond to the projections $Q_{[m]}$ of $\ca(T)$.
To make this precise if $[m] = [m_1 \dots m_d]$ is the binary expansion of a number from $0$ to $2^d - 1$ then the subspace corresponding to $Q_{[m]}$ consists of the points $[\mu] \in \Om$ for which:
\begin{enumerate}
\item $i \mu \in \La^*$ for all $i$ with $m_i =1$; and
\item $i \mu \not\in \La^*$ for all $i$ with $m_i = 0$.
\end{enumerate}
We record here the following proposition from \cite{KS15} for future reference.

\begin{proposition}\label{P: finite A} \cite{KS15}
Let $\La^*$ be a language on $d$ symbols.
Then $\Om$ is finite if and only if $\La^*$ is a sofic language.
If, in particular, $\La^*$ is of finite type $k$ then $A_l = A_{k}$ for all $l \geq k+1$.
\end{proposition}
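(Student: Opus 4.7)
\emph{Proof proposal.} The plan is to analyse the descending chain $(\sim_l)$ of equivalence relations on $\La^*$ and its link to $\sim$. Reading off the definitions one obtains
\[
\mu\sim\nu \;\Longleftrightarrow\; \mu\sim_l\nu \text{ for every } l\geq 0,
\]
so $\sim=\bigcap_l\sim_l$; in particular $\sim$ refines every $\sim_l$, while the surjectivity of $\vth\colon\Om_{l+1}\to\Om_l$ recorded in the preliminaries translates to $\sim_{l+1}\subseteq\sim_l$. Hence the sequence $|\Om_l|$ is nondecreasing.

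For the first assertion I will prove both directions via the observation that $|\Om_l|$ is bounded if and only if it stabilises. If $\La^*$ is sofic with $N$ classes then $|\Om_l|\leq N$, so the sequence stabilises at some $L\leq N$; for $l\geq L$ each $\vth$ is a bijection of finite sets, and hence $\Om=\varprojlim\Om_l=\Om_L$ is finite. Conversely, if $\Om$ is finite then the canonical projection $\Om\twoheadrightarrow\Om_l$, which is surjective by the Mittag-Leffler property for inverse limits of finite non-empty sets with surjective bonding maps, forces $|\Om_l|\leq|\Om|$; again $|\Om_l|$ stabilises at some $L$, so $\sim_l=\sim_L$ for $l\geq L$, and therefore $\sim=\bigcap_l\sim_l=\sim_L$ has only finitely many classes.

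For the finite-type part, the key input is the Markov characterisation of type-$k$ languages: any word of length $\geq k+1$ lies in $\La^*$ precisely when each of its length-$(k+1)$ subwords does. Using this I will show directly that $\sim_l=\sim_{l+1}$ for every $l\geq k$, which iteratively yields $A_l=A_k$ for all $l\geq k$ (and in particular for $l\geq k+1$). Given $\mu\sim_l\nu$ and $w\in\B_{l+1}(\La^*)$, write $w=yw'$ with $|y|=1$ and $|w'|=l\geq k$, so that $w'$ is the part of $w$ adjacent to $\mu$ in the concatenation $w\mu$. Each length-$(k+1)$ subword of $w\mu$ either lies inside $w$ or crosses the boundary between $w$ and $\mu$; in the latter case it uses at most $k$ letters from $w$, all of which lie in $w'$ since $|w'|\geq k$, and so is already a subword of $w'\mu$. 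The Markov criterion therefore yields
\[
w\mu\in\La^* \;\Longleftrightarrow\; w\in\La^* \text{ and } w'\mu\in\La^*,
\]
and symmetrically for $\nu$. Combined with $\mu\sim_l\nu$ applied to $w'\in\B_l(\La^*)$, this gives $\mu\sim_{l+1}\nu$.

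I expect the subword bookkeeping in this last step to be the main technical hurdle: one must confirm that no length-$(k+1)$ subword of $w\mu$ simultaneously involves the single letter $y$ and a letter of $\mu$, a property guaranteed precisely by $|w'|\geq k$. Once this local reduction is in place, everything else reduces to a routine inverse-limit argument resting on the identity $\sim=\bigcap_l\sim_l$.
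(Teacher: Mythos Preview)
The paper does not supply its own proof of this proposition; it is quoted verbatim from \cite{KS15} and merely recorded for later use. Your argument is self-contained and correct: the inverse-limit bookkeeping for the first assertion and the Markov subword reduction for the finite-type part are exactly what is needed.

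One cosmetic point in the second half: your dichotomy ``either lies inside $w$ or crosses the boundary'' omits the case of a length-$(k+1)$ subword lying entirely inside $\mu$. Since $\mu\in\La^*$ this case is immediate, but it is worth stating explicitly. With that addition the displayed equivalence $w\mu\in\La^* \Leftrightarrow w\in\La^*$ and $w'\mu\in\La^*$ is fully justified, and the conclusion $\sim_l=\sim_{l+1}$ for $l\geq k$ (hence $A_l=A_k$) follows.
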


Therefore if $\La^*$ is sofic then there exists a stabilizing step $k$ for which $A_l = A_k$ for all $l \geq k+1$.
When $\La^*$ is in particular of finite type, then we get the following proposition for the possible equivalent classes.

\begin{proposition}\label{P: classes for sft}
Let $\La^*$ be a language on $d$ symbols.
If $\La^*$ is of finite type $k$ then for all $\mu \in \La^*$ with $|\mu| \geq k$ we have that
\[
[\mu_1 \dots \mu_{|\mu|}]_{k} = [\mu_1 \dots \mu_k]_{k}.
\]
\end{proposition}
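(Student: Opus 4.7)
The plan is to reduce the proposition to showing $\mu \sim_k \mu'$, where $\mu' := \mu_1 \cdots \mu_k$ is the length-$k$ initial segment of $\mu$; this yields $[\mu]_k = [\mu']_k$ directly from the definition of $\sim_k$. First I would note that $\mu'$ is a subword of $\mu \in \La^*$, hence $\mu' \in \La^*$, so both equivalence classes are defined.

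The core of the argument is to fix an arbitrary $w \in \B_k(\La^*)$ and establish $w\mu \in \La^* \iff w\mu' \in \La^*$. One direction is immediate from the factorial property: $w\mu'$ is an initial subword of $w\mu$, so if $w\mu \in \La^*$ then $w\mu' \in \La^*$. For the converse I would invoke the finite-type hypothesis, which supplies a basis $F$ of forbidden words with $|\phi| \leq k+1$ for every $\phi \in F$, and for which membership in $\La^*$ is detected by the non-occurrence of any $\phi \in F$. Supposing $w\mu' \in \La^*$ but $w\mu \notin \La^*$, fix an occurrence in $w\mu$ of some $\phi \in F$. Since $w, \mu \in \La^*$, this occurrence can be confined neither to $w$ nor to $\mu$, so it must straddle the boundary between them. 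The bound $|\phi| \leq k+1$ then forces the portion of $\phi$ lying inside $\mu$ to have length at most $k$, hence to be contained in the initial segment $\mu'$. Consequently the same occurrence of $\phi$ already appears in $w\mu'$, contradicting $w\mu' \in \La^*$.

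The main obstacle, and really the only subtle point, is matching the indexing of the statement with the orientation in which $w$ meets $\mu$: since in the definition of $\sim_k$ the test word $w$ is prepended on the left, the part of $\mu$ that can interact with $w$ through a minimal forbidden word is precisely its leftmost length-$k$ segment $\mu_1 \cdots \mu_k$. Once this orientation is fixed, the finite-type bound $|\phi| \leq k+1$ renders the forbidden-word bookkeeping immediate, and $\mu \sim_k \mu'$ follows.
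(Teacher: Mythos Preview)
Your proof is correct and follows essentially the same approach as the paper: both reduce to showing $w\mu \in \La^* \iff w\mu_1\cdots\mu_k \in \La^*$, handle the forward direction by the factorial property, and for the converse locate a minimal forbidden word straddling the $w$--$\mu$ boundary. The only cosmetic difference is in the final contradiction: the paper counts that such a word must have length at least $k+2$ (contradicting type $k$), whereas you observe that the length bound $\leq k+1$ forces the word to already occur in $w\mu_1\cdots\mu_k$ (contradicting $w\mu_1\cdots\mu_k \in \La^*$); these are equivalent formulations of the same constraint.
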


\begin{proof}
Fix an allowable word $\mu = \mu_1 \dots \mu_k \dots \mu_{|\mu|}$.
By the properties of the language, if $w\mu \in \La^*$ then $w \mu_1 \dots \mu_k \in \La^*$ as well.
Conversely let $w \in \La^*$ such that $w \mu_1 \dots \mu_k \in \La^*$.
To reach contradiction suppose that $w\mu \notin \La^*$.
Therefore there is an $n_1$ and an $n_2$ such that
\[
\nu := w_{n_1} \dots w_{k} \, \mu_1 \dots \mu_k \, \mu_{k+1} \dots \mu_{n_2} \notin \La^*.
\]
Choose $n_1$ and $n_2$ so that $\nu$ is a minimal forbidden word.
Since $\mu \in \La^*$ (hence $\mu_1 \dots \mu_{n_2} \in \La^*$) we have that $w_{n_1} \dots w_k \neq \mt$; hence $|w| + 1 - n_1 \geq 1$.
Similarly, since $w \mu_1 \dots \mu_k \in \La^*$ (hence $w_{n_1} \dots w_k \mu_1 \dots \mu_k \in \La^*$) we have that $\mu_{k+1} \dots \mu_{n_2} \neq \mt$; hence $n_2 \geq k + 1$.
Therefore $\nu$ is a forbidden word of length at least
\[
(|w| + 1 - n_1) + n_2 \geq k+2.
\]
Since $\La^*$ is of type $k$, the word $\nu$ cannot be minimal, which gives the required contradiction.
\end{proof}

We define the maps $\al_i \colon A \to A$ such that $\al_i(a) = T_i^* a T_i$.
It is clear that every $\al_i$ is a positive map and  takes values in $A$ since
\[
\al_i(T_\mu^* T_\mu) = T_{\mu i}^* T_{\mu i} \in A
\]
for all $\mu \in \La^*$.
In fact every $\al_i$ is a $*$-endomorphism of $A$, since $T_\mu^*T_\mu$ commutes with $T_iT_i^*$ and $T_i T_i^* T_i = T_i$.
The $\al_i$ induce the required covariant relation
\[
a T_i = T_i \al_i(a) \foral a \in A, i=1, \dots, d.
\]
used in the study of $\T_{\La^*}^+$.
We use the identification of $A$ with $C(\Om)$ to get a translation of each $\al_i$ as a continuous map $\vpi_i$ partially defined on $\Om$.

Let $A^i$ be the direct summand $T_i^*T_i A$ of $A$, with unit $T_i^*T_i$.
Then $A^i$ is the direct limit of $A^i_l = A_l \cap A^i$, and the corresponding projective limit $\Om^i$ is determined by the spaces
\[
\Om_l^i := \{[\mu]_l \in \Om_l \mid i \mu \in \La^*\}
\]
and the map
\[
\vth \colon \Om_{l+1}^i \to \Om_l^i : [\mu]_{l+1} \to [\mu]_{l}.
\]
Hence $\al_i \colon A \to A^i$ is a unit preserving map from $A = C(\Om)$ into $A^i := T_i^* T_i A = C(\Om^i)$, and therefore induces a continuous map $\vpi_i : \Om^i \rightarrow \Om$.

\begin{proposition}\label{P: graph sft}
Let $\La^*$ be a language on $d$ symbols.
With the above notation we have that $\al_i|_{A_l} \colon A_l \to A_l^i$ is induced by $\vpi_i \colon \Om_l^i \to \Om_l$ such that $\vpi_i([\mu]_{l+1}) = [i\mu]_{l}$.
\end{proposition}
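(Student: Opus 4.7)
The plan is to establish the statement in three steps: first check that the prescription $\vpi_i([\mu]_{l+1}) = [i\mu]_l$ yields a well-defined map between the finite discrete spaces $\Om^i_{l+1}$ and $\Om_l$; second, identify the images of the generators $T_\mu^* T_\mu$ of $A_l$ under the Gelfand picture $A_l \simeq C(\Om_l)$; and third, match $\al_i$ with the pullback by $\vpi_i$ on these generators (where density then concludes the proof by continuity). The maps involved are $*$-homomorphisms between finite-dimensional commutative C*-algebras, so there is nothing to check beyond agreement on generators.

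For well-definedness, suppose $\mu \sim_{l+1} \nu$ with both $i\mu,i\nu\in\La^*$. To see that $i\mu \sim_l i\nu$, I would take an arbitrary $w\in\B_l(\La^*)$ and split into two cases. If $wi\in\B_{l+1}(\La^*)$, then $w(i\mu)\in\La^* \Leftrightarrow wi\in\La^*\text{ and }w(i\mu)\in\La^*$, and applying the relation $\mu\sim_{l+1}\nu$ with the length-$(l+1)$ word $wi$ gives $w(i\mu)\in\La^* \Leftrightarrow w(i\nu)\in\La^*$. If instead $wi\notin\La^*$, then by the factorial property neither $w(i\mu)$ nor $w(i\nu)$ lies in $\La^*$, so the condition holds vacuously. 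This is the mildly fiddly step, but no obstacle of substance: it is the only place where the passage from level $l+1$ to level $l$ is essential.

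For the Gelfand identification, I would record that each class $[\eta]_l\in\Om_l$ is by definition determined by the set $F_l([\eta]):=\{w\in\B_l(\La^*) : w\eta\in\La^*\}$, and that, under $A_l\simeq C(\Om_l)$, the projection $T_\mu^*T_\mu$ for $\mu\in\B_l(\La^*)$ corresponds to the characteristic function of the clopen set $\{[\eta]_l : \mu\in F_l([\eta])\} = \{[\eta]_l : \mu\eta\in\La^*\}$. For the matching step I would then use the identity $T_\mu^*T_\mu\, T_i = T_i\, T_{\mu i}^*T_{\mu i}$ (listed among the basic relations) together with $T_i^*T_i\cdot T_{\mu i}^*T_{\mu i}=T_{\mu i}^*T_{\mu i}$ to obtain $\al_i(T_\mu^*T_\mu)=T_{\mu i}^*T_{\mu i}$. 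As a function on $\Om^i_{l+1}$ this equals $[\eta]_{l+1}\mapsto \chi_{\{\mu i\eta\in\La^*\}}$. On the other hand, the pullback of $T_\mu^*T_\mu\in C(\Om_l)$ along $\vpi_i$ sends $[\eta]_{l+1}$ to $(T_\mu^*T_\mu)([i\eta]_l)=\chi_{\{\mu\cdot i\eta\in\La^*\}}$, which is the same function. Since the $T_\mu^*T_\mu$ with $\mu\in\B_l(\La^*)$ generate $A_l$ as a C*-algebra, the agreement on generators forces $\al_i$ on $A_l$ to coincide with the $*$-homomorphism $f\mapsto f\circ\vpi_i$, as claimed.
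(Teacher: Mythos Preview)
Your proof is correct and follows essentially the same route as the paper: both arguments reduce to checking $\al_i(f)=f\circ\vpi_i$ on the generators $f=T_\mu^*T_\mu$ with $\mu\in\B_l(\La^*)$, identify $T_\mu^*T_\mu$ with the characteristic function of $\{[\eta]:\mu\eta\in\La^*\}$, use $\al_i(T_\mu^*T_\mu)=T_{\mu i}^*T_{\mu i}$, and then verify that the pullback along $\vpi_i$ yields the same characteristic function. The only difference is that you include an explicit well-definedness check for $\vpi_i$ (the case split on whether $wi\in\La^*$), which the paper omits as implicit; otherwise the two arguments coincide.
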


\begin{proof}
We have to show that $\al_i(f) = f \vpi_i$ for all $f \in A_l = C(\Om_l)$.
It suffices to do so for $f = T_\mu^*T_\mu$ with $\mu \in \B_l(\La^*)$.
To this end we have that
\[
\al_i(T_\mu^* T_\mu) = T_{\mu i}^* T_{\mu i} = \chi_{\A}
\]
for the set $\A = \{[w]_{l+1} \mid \mu i w \in \La^*\}$.
On the other hand we have that $T_\mu^* T_\mu = \chi_\B$ for the set $\B = \{[w]_l \mid \mu w \in \La^*\}$.
Hence we compute
\[
\chi_\B\left( \vpi_i([w]_{l+1}) \right)
=
\chi_\B([iw]_{l})
=
\begin{cases}
1 & \text{ if } \mu i w \in \La^*,\\
0 & \text{ otherwise},
\end{cases}
\]
which shows that $\chi_\B \vpi_i = \chi_\A = \al_i(\chi_\B)$.
\end{proof}

The universal property of the projective limit implies that this information is enough to describe $\vpi_i$.
Indeed we have that
\[
\Om = \{(\om_n)_{n \geq 0} \mid \om_n = [\mu]_n, \mu \in \La^*\}.
\]
Therefore for $\om \in \Om^i$, i.e. $\om_n = [\mu]_n$ such that $i\mu \in \La^*$, we get
\[
\vpi_i(\om) = ([i\mu]_n).
\]
In the particular case when $\La^*$ is sofic let $k$ be the step so that $A_l = A_{k}$ for all $l \geq k+1$.
Then we have that $\Om_{l} = \Om_{k}$ for $l \geq k+1$, and hence
\[
\Om \simeq \{[\mu]_{k} \mid \mu \in \La^*\}.
\]
The projective limit description gives that the induced $\vpi_i$ is then given by $[\mu]_{k} \mapsto [i\mu]_{k}$, for $\mu \in \La^*$ with $i\mu \in \La^*$.

\begin{definition}\label{D: qd}
Let $\La^*$ be a language on $d$ symbols.
We call the $(A,\al) \equiv (A,\al_1, \dots, \al_d)$, or alternatively the $(\Om, \vpi) \equiv (\Om,\vpi_1, \dots, \vpi_d)$, the \emph{quantized dynamics of $\La^*$}.
\end{definition}

\begin{remark}
The intersection of the subspaces corresponding to the $T_\mu^* T_\mu$ is always non-empty since $\prod_{\mu \in \La^*} T_\mu^*T_\mu e_\mt = e_\mt$.
In fact the intersection consists of the single point $([\mt]_n) \in \Om$.
It is evident that $([\mt]_n)$ is in the domain of all $\vpi_\mu$ with $\mu \in \La^*$.
\end{remark}

\section{Local piecewise conjugacy}

Let $\La^*$ and $M^*$ be languages.
Fix their associated quantized dynamics $(\Om_{\La^*}, \vpi)$ and $(\Om_{M^*}, \psi)$, and let $Q$ and $P$ be the corresponding systems of projections.
Recall that a $*$-isomorphism $\ga \colon C(\Om_{M^*}) \to C(\Om_{\La^*})$ induces a homeomorphism $\ga_s \colon \Om_{\La^*} \to \Om_{M^*}$ on the spectra.

We have the following notion of conjugacy for the quantized dynamics.
For convenience we use the notation
\[
\supp [m] := \{i \in \{1, \dots, d\} \mid m_i = 1\}
\]
where $[m] = [m_1 \dots m_d]$ is the binary expansion of $m \in \{0,1, \dots, 2^d-1\}$.

\begin{definition}\label{D: QPloc}
We say that the systems $(\Om_{\La^*}, \vpi)$ and $(\Om_{M^*}, \psi)$ are \emph{$Q$-$P$-locally piecewise conjugate} if:
\begin{enumerate}
\item there exists a homeomorphism $\ga_s \colon \Om_{\La^*} \to \Om_{M^*}$; and
\item for every $x \in Q_{[m]}$ there is a neighbourhood $x \in \U \subseteq Q_{[m]}$ and an $[n] \in \{0,1, \dots, 2^d-1\}$ such that $|\supp [n]| = |\supp [m]|$, $\ga_s(\U) \subseteq P_{[n]}$, and
\[
\ga_s \vpi_i|_{\U} = \psi_{\pi(i)} \ga_s|_{\U},
\]
for a bijection $\pi \colon \supp [m] \to \supp [n]$.
\end{enumerate}
\end{definition}

Equivalently, every $Q_{[m]} \subseteq \Om_{\La^*}$ has an open cover $\{\U_\pi\}_{\pi}$ indexed by the one-to-one correspondences $\pi \colon \supp [m] \to \{1, \dots, d\}$ such that $\ga_s(\U_\pi) \subseteq P_{[n]}$ for all $[n]$ with $\supp [n] = \pi(\supp [m])$, and $\ga_s \vpi_i|_{\U_\pi} = \psi_{\pi(i)} \ga_s|_{\U_\pi}$.
We do not exclude the case where $\U_\pi = \mt$ for some $\pi$.
As an immediate consequence if an $\om \in Q_{[m]}$ is in the intersection of $\U_{\pi}$ with $\U_{\pi'}$ then
\[
\psi_{\pi(i)} \ga_s(\om) = \ga_s \vpi_i(\om) = \psi_{\pi'(i)} \ga_s(\om).
\]
for all $i \in \supp [m]$.

\begin{remark}\label{R: fn}
We can use local piecewise conjugacy to define inductively maps $f_n \colon \B_n(\La^*) \to\B_n(M^*)$.
For every step we start at the point $\om := ([\mt]_n)$ which is in the intersection of all subspaces corresponding to $T_\mu^* T_\mu$ for $\mu \in \La^*$.
By definition there exists a neighborhood $\U_\pi \subseteq Q_{[1\dots 1]}$ containing $\om$ such that
\[
\ga_s \vpi_i(\om) = \psi_{\pi(i)} \ga_s(\om) \foral i=1, \dots, d.
\]
We write $\pi_{\mt, 0}$ for the bijection $\pi$ and set $f_1 = \pi_{\mt, 0}$.
Consequently we obtain 
\[
\ga_s \vpi_{\mu_1}(\om) = \psi_{\pi_{\mt, 0}(\mu_1)} \ga_s(\om).
\]
For $f_2$ let a word $\mu = \mu_2 \mu_1 \in \B_2(\La^*)$.
Since $\mu_2 \mu_1 \in \La^*$ then $\om$ is in the domain of  $\vpi_{\mu_2} \vpi_{\mu_1} = \vpi_{\mu_1 \mu_2}$.
We apply the same argument for $\om_1 = \vpi_{\mu_1}(\om)$ and find a bijection $\pi_{\mu, 1}$ coming from the neighborhood $\U_{\pi_{\mu,1}}$ of $\om_1$ such that
\[
\ga_s \vpi_{\mu_2}(\vpi_{\mu_1}(\om)) = \psi_{\pi_{\mu, 1}(\mu_2)} \ga_s(\vpi_{\mu_1}(\om)) = \psi_{\pi_{\mu, 1}(\mu_2)} \psi_{\pi_{\mt, 0}(\mu_1)} \ga_s(\om).
\]
In particular we get that $\ga_s(\om)$ is in the domain of
\[
\psi_{\pi_{\mu, 1}(\mu_2)} \psi_{\pi_{\mt, 0}(\mu_1)} = \psi_{\pi_{\mu, 1}(\mu_2) \pi_{\mt, 0}(\mu_1)}
\]
and hence the word $\pi_{\mu, 1}(\mu_2) \pi_{\mt, 0}(\mu_1)$ is in $M^*$.
This procedure gives a well defined map $f_2 \colon \B_2(\La^*) \to \B(M^*)$.
The same argument applies for a word $\mu_1$ of length $n$ (instead of just a letter) and a letter $\mu_2$, and gives a map $f_n \colon \B_n(\La^*) \to \B_n(M^*)$ that is defined by
\[
f_n(\mu_n \dots \mu_1) := \pi_{\mu, n - 1}(\mu_n) \dots \pi_{\mt, 0}(\mu_1).
\]
The notation $\pi_{\mu, k} \equiv \pi_{\mu_k}$ denotes that this bijection comes from the neighborhood $\U_{\pi_{\mu_k}}$ of the point $\vpi_{\mu_{k} \dots \mu_1}(\om)$.

Notice that each $f_n$ depends on the point $\om := ([\mt]_n)$.
However it also depends on the orbit of $\om$.
For a word $\mu = \mu_n \dots \mu_1 \in \La^*$ we have to keep track where we are at under $f_{n-1}$, since the $n$th bijection depends on where $\vpi_{\mu_{n-1}} \cdots \vpi_{\mu_1}(\om)$ sits.
It is clear that
\[
f_n(\mu_n \dots \mu_1) = \pi_{\mu, n - 1}(\mu_n) f_{n-1}(\mu_{n-1} \dots \mu_1),
\]
but in general $f_n(\mu_n \dots \mu_1)$ may be different than $f_1(\mu_n) f_{n-1}(\mu_{n-1} \dots \mu_1)$.
We present such a case in Example \ref{E: even vs sft}.
\end{remark}

Local piecewise conjugacy respects certain properties of languages.
This is very pleasing as several invariants for the usual topological conjugacy of subshifts depend on these data.

\begin{proposition}\label{P: same size}
If $\La^*$ and $M^*$ are locally piecewise conjugate languages then the maps $f_n$ defined in Remark \ref{R: fn} are bijections.
Consequently $|\B_n(\La^*)| = |\B_n(M^*)|$ for all $n \in \bN$.
\end{proposition}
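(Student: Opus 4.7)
The plan is to prove each $f_n$ is a bijection by establishing injectivity and then invoking the symmetry of local piecewise conjugacy. Since $\B_n(\La^*)$ and $\B_n(M^*)$ are finite subsets of $\{1, \ldots, d\}^n$, injections in both directions force equal cardinalities; once we have this, the second claim follows, and an injection between finite sets of equal size is automatically bijective.

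For injectivity of $f_n$, I would argue by induction on the position $k \in \{1,\ldots,n\}$. Suppose $f_n(\mu_n \cdots \mu_1) = f_n(\nu_n \cdots \nu_1) = w_n \cdots w_1$. The key observation is that in the construction of Remark \ref{R: fn} the bijection used at step $k$ depends only on the current orbit point $\vpi_{\mu_{k-1} \cdots \mu_1}(\om)$, with the understanding that $\pi_{\mt, 0}$ is attached to $\om = ([\mt]_n)$ itself: whenever two covering neighborhoods $\U_\pi, \U_{\pi'}$ share this point, the intersection identity $\psi_{\pi(i)} \ga_s = \psi_{\pi'(i)} \ga_s$ on $\U_\pi \cap \U_{\pi'}$ from Definition \ref{D: QPloc} ensures that the output letter at step $k$ is independent of the valid choice. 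In other words, when computing $f_n(\mu)$ and $f_n(\nu)$ at a common orbit point, we may use the same bijection. At the base case $k=1$ the common point is $\om \in Q_{[1\cdots 1]}$, and a single bijection $\pi_{\mt, 0}$ sends both $\mu_1$ and $\nu_1$ to $w_1$, hence $\mu_1 = \nu_1$. For the inductive step, if $\mu_j = \nu_j$ for all $j < k$, then $\vpi_{\mu_{k-1}\cdots\mu_1}(\om) = \vpi_{\nu_{k-1}\cdots\nu_1}(\om)$, so the same bijection $\pi$ applies at step $k$ and $\pi(\mu_k) = w_k = \pi(\nu_k)$ forces $\mu_k = \nu_k$.

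Now local piecewise conjugacy is symmetric: the inverse homeomorphism $\ga_s^{-1}$ paired with inverse bijections $\pi^{-1} \colon \supp[n] \to \supp[m]$ exhibits the quantized dynamics of $M^*$ as l.p.c.\ to that of $\La^*$. Running the construction of Remark \ref{R: fn} from the base point of $\Om_{M^*}$ thus produces injective maps $g_n \colon \B_n(M^*) \to \B_n(\La^*)$ by the same argument, whence $|\B_n(\La^*)| = |\B_n(M^*)|$ and each $f_n$ is bijective. The principal subtlety of the proof is the initial justification that the bijection at a shared orbit point can be chosen uniformly when comparing two candidate preimages of the same word; this is precisely what the intersection clause in Definition \ref{D: QPloc} together with the well-definedness assertion of Remark \ref{R: fn} provides, after which the induction is routine.
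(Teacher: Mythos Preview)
Your argument is essentially identical to the paper's: prove each $f_n$ is injective by an induction that hinges on the bijection at step $k$ being determined by the orbit point $\vpi_{\mu_{k-1}\cdots\mu_1}(\om)$, then invoke the symmetry of local piecewise conjugacy to obtain the reverse inequality and hence bijectivity. One small correction: the intersection identity from Definition~\ref{D: QPloc} only yields $\psi_{\pi(i)}\ga_s(\om)=\psi_{\pi'(i)}\ga_s(\om)$, not $\pi(i)=\pi'(i)$, so it does not by itself force the \emph{output letter} to be independent of the choice of covering neighbourhood; what actually makes your induction work (and what the paper uses without further comment) is simply that the construction in Remark~\ref{R: fn} fixes one bijection per orbit point, so once $\mu_{k-1}\cdots\mu_1=\nu_{k-1}\cdots\nu_1$ the same $\pi$ is applied at step $k$ to both words.
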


\begin{proof}
First we show that the $f_n$ are one-to-one.
To this end suppose that
\[
\pi_{\mu, n - 1}(\mu_n) \dots \pi_{\mt, 0}(\mu_1) = \pi_{\nu, n -1}(\nu_n) \dots \pi_{\mt, 0}(\nu_1).
\]
Then we get that $\pi_{\mu, i -1}(\mu_i) = \pi_{\nu, i-1}(\nu_i)$ for all $i=1, \dots, n$.
In particular we have that $\pi_{\mt, 0}(\mu_1) = \pi_{\mt, 0}(\nu_1)$ and therefore $\mu_1 = \nu_1$.
Consequently we get that $\pi_{\mu, 1} = \pi_{\nu, 1}$, hence $\mu_2 = \nu_2$.
Inductively we have that $\mu_i = \nu_i$ for all $i=1, \dots n$.
Therefore we get $|\B_n(\La^*)| \leq |\B_n(M^*)|$.
By symmetry we obtain equality which implies that the $f_n$ are bijections.
\end{proof}

\begin{remark}
Since the functions $f_n$ are bijections then $\ga_s(\om)$ is in the domain of all $\psi_\nu$ for $\nu \in M^*$, for $\om = ([\mt]_n)$.
Hence $\ga_s$ fixes the points $([\mt]_n)$ of $\Om_{\La^*}$ and $\Om_{M^*}$.
\end{remark}

The entropy of a language $\La^*$ is given by $h(\La^*) := \lim_n n^{-1} \log_2 |\B_n(\La^*)|$.
The following corollary is immediate.

\begin{corollary}\label{C: entropy}
Local piecewise conjugate languages share the same entropy.
\end{corollary}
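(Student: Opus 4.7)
The plan is to invoke Proposition \ref{P: same size} directly. Since local piecewise conjugacy of $\La^*$ and $M^*$ yields, for every $n \in \bN$, a bijection $f_n \colon \B_n(\La^*) \to \B_n(M^*)$, the cardinalities $|\B_n(\La^*)|$ and $|\B_n(M^*)|$ coincide for every $n$. Substituting into the definition
\[
h(\La^*) := \lim_n \frac{1}{n} \log_2 |\B_n(\La^*)|,
\]
the two sequences whose limits define $h(\La^*)$ and $h(M^*)$ are termwise equal, so the limits agree (and exist simultaneously).

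In other words, there are no further steps: the combinatorial content has already been extracted in Proposition \ref{P: same size}, and the corollary is just the observation that entropy is a function of the growth of $(|\B_n(\La^*)|)_{n}$. The only thing to verify is that the definition of entropy we use is indeed the standard one based on the size of the blocks $\B_n$, which is the case here. There is no genuine obstacle; the statement is recorded as a corollary precisely because it is an immediate consequence of the bijections $f_n$ constructed in Remark \ref{R: fn} and shown to be bijective in Proposition \ref{P: same size}.
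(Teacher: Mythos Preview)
Your proposal is correct and matches the paper's own treatment: the paper simply states that the corollary is immediate from Proposition~\ref{P: same size} and the definition of entropy, which is exactly what you do. There is nothing further to add.
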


Local piecewise conjugacy respects the class of sofic languages.
However, as we will see in Example \ref{E: even vs sft}, it does not preserve the class of SFT's.

\begin{proposition}
Let $\La^*$ and $M^*$ be locally piecewise conjugate languages.
If $\La^*$ is  sofic then so is $M^*$.
\end{proposition}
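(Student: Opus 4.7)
The plan is to reduce this to Proposition \ref{P: finite A}, which characterises soficity of a language in terms of the finiteness of the associated spectrum $\Om$. Once we have this characterisation, the result is almost immediate from clause (i) of Definition \ref{D: QPloc}, which guarantees the existence of a homeomorphism $\ga_s \colon \Om_{\La^*} \to \Om_{M^*}$.

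More precisely, I would argue as follows. Suppose $\La^*$ is sofic. By Proposition \ref{P: finite A}, the projective limit $\Om_{\La^*}$ is a finite set. Since $\ga_s \colon \Om_{\La^*} \to \Om_{M^*}$ is a homeomorphism, it is in particular a bijection of sets, so $\Om_{M^*}$ is also finite (with the same cardinality as $\Om_{\La^*}$). Applying Proposition \ref{P: finite A} in the reverse direction, we conclude that $M^*$ is sofic.

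There is no real obstacle here: the hard content has already been packaged into Proposition \ref{P: finite A} and into the definition of l.p.c., which provides the homeomorphism for free. In particular, the additional piecewise structure in clauses (ii) of Definition \ref{D: QPloc} is not needed; the bare topological equivalence of the spectra suffices. One could note in passing that the finer information from Proposition \ref{P: same size}, namely $|\B_n(\La^*)| = |\B_n(M^*)|$ for all $n$, gives an alternative (but more roundabout) route: if $\La^*$ is sofic then $|\B_n(\La^*)|$ grows in a way compatible with having finitely many follower-set classes, but translating this directly into soficity of $M^*$ would require extra work, so the spectrum-based argument is the cleaner choice.
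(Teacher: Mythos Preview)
Your argument is correct and matches the paper's own proof, which simply reads ``Immediate by Proposition \ref{P: finite A}.'' You have just made explicit the one-line reasoning: soficity is equivalent to finiteness of $\Om$, and the homeomorphism $\ga_s$ from Definition \ref{D: QPloc}(i) transfers finiteness from $\Om_{\La^*}$ to $\Om_{M^*}$.
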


\begin{proof}
Immediate by Proposition \ref{P: finite A}.
\end{proof}

\section{Applications to the follower set graph}

The quantized dynamics can be described by a finite graph in the case of the sofic languages.
Fix a sofic language $\La^*$ on $d$ symbols.
For every $\mu \in \La^*$ let
\[
F(\mu) : = \{w \in \La^* \mid w \mu \in \La^*\}
\]
be the follower set of $\mu$.
We write $F_{\La^*}(\mu)$ when we want to highlight the language to which we refer\footnote{\ We will rarely use this notation, as the language will be clear from the context.}.
Recall that the elements in $\Om$ are of the form $([\mu]_n)$ for $\mu \in \La^*$.
Since $([\mu]_n) = ([\nu]_n)$ if and only if $F(\mu) = F(\nu)$ we have a bijection between $\Om$ and the set $\{F(\mu) \mid \mu \in \La^*\}$.
By definition soficity is equivalent to $\Om$ being finite.
In this case Proposition \ref{P: finite A} provides the existence of a stabilizing step $k$ such that $\Om_l = \Om_k$ for all $l \geq k+1$; i.e. every $F(\mu)$ can be identified with $[\mu]_{k}$.

The \emph{follower set graph} $\fG_{\La^*} = (G_{\La^*}, \fL)$ of a sofic language on $d$ symbols is an edge-labeled graph, where $\fL$ is a colouring map from the edge set $G_{\La^*}^{(1)}$ onto $\{1, \dots, d\}$, defined as follows:
\begin{enumerate}
\item The vertices of $G_{\La^*}$ are given by the follower sets.
\item We draw an edge labeled $i$ from $F(\mu)$ to $F(i\mu)$ if and only if $F(i\mu) \neq \mt$.
\end{enumerate}
Therefore we have an edge (labeled $i$) between $\om = [\mu]_{k}$ and $\om' = [\nu]_{k}$ if $[i\mu]_{k} = [\nu]_{k}$, i.e. if $\vpi_i(\om) = \om'$.
It follows that the labeled graph gives a representation of the quantized dynamics.
Notice that this procedure gives also the follower set graph in the case of a two-sided subshift \cite[Section 3.2]{LM95} (when everything is written in the opposite direction).

Notice that we do not use soficity for the construction of the follower set graph, and one may be tempted to produce a follower set graph construction for any language.
However this is not the right thing to do.
As we saw in Section \ref{Ss: qd}, the spectrum of $A$ is totally disconnected and configuring the dynamics by a discrete structure would not comply with the topology.
By Proposition \ref{P: finite A}, it is exactly when $\La^*$ is sofic that the spectrum is discrete and we are able to picture the quantized dynamics by a graph without losing information about the topology. 

\begin{example}\label{E: sft 1}
Let $\La^*$ be of type $1$.
Then $\Om \simeq \Om_1 = \{[i_1]_1, \dots, [i_r]_1\}$ for some symbols $i_1, \dots, i_r \in \{1, \dots, d\}$.
For the follower set graph we have an edge labeled $j$ from $[i]_1 = F(i)$ to $[ji]_1 = [j]_1 = F(j)$ whenever $ji \in \La^*$.
Notice here the use of Proposition \ref{P: classes for sft}.

The case of type $1$ languages requires less information to store, as the label of an edge coincides with the one-lettered word that labels the vertex where the edge terminates.
\end{example}

\begin{remark}\label{R: aug}
There is a strong connection between the follower set graph of a sofic language $\La^*$ and that coming from its augmented subshift $(\wt{\La}, \si)$.
Suppose that $\La^*$ is on $d$ symbols and $\fG$ is its follower set graph.
Recall that $F(\mt) = F(\ze \mu)$ for every $\mu \in \wt{\La}^*$ for the added distinguished symbol $\ze$.
Therefore the follower set graph of $\wt{\La}^*$ contains $\fG$, shares the same vertex set, and contains in addition edges from every vertex to $F(\mt)$ labeled by $\ze$.
\end{remark}

When the language is of finite type then we can find a finite set of forbidden words that describes it.
By using this, we can determine the follower set graph in a finite number of steps.
Indeed the dual of Proposition \ref{P: classes for sft} suggests that the determination of every $F(\mu)$ can be achieved in finite steps, even though $F(\mu)$ may be infinite in principle.

\begin{proposition}\label{P: finite right}
Let $\La^*$ be a language of finite type $k$.
For every $\mt \neq \mu \in \La^*$ and $w_n \dots w_1 \in \La^*$ with $n \geq k$ we have that
\[
w \mu \in \La^* \qiff w_k \dots w_1 \mu \in \La^*.
\]
\end{proposition}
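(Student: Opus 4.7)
The approach follows exactly the dual pattern of the proof of Proposition \ref{P: classes for sft}, but now we extend on the right rather than the left of the fixed word.

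First, the direction $(\Rightarrow)$ is essentially automatic from the definition of a language: if $w\mu \in \La^*$ then every subword of it lies in $\La^*$, and $w_k \dots w_1 \mu$ is a subword (obtained by deleting the prefix $w_n \dots w_{k+1}$).

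For the nontrivial direction $(\Leftarrow)$, my plan is a proof by contradiction. Assume $w \mu \notin \La^*$ and look for a minimal forbidden subword $\nu$ inside $w \mu$, writing $w \mu = \alpha \, \nu \, \beta$. The key observation is that $\nu$ cannot be a subword of $w = w_n \dots w_1$ alone (since $w \in \La^*$ by hypothesis) and cannot be a subword of $w_k \dots w_1 \mu$ alone (since that word is in $\La^*$ by hypothesis). So $\nu$ must genuinely straddle the cut between $w_{k+1}$ and $w_k$: its leftmost letter is $w_j$ for some $j \geq k+1$, and it must extend at least one letter past $w_1$ into $\mu$. Thus $\nu$ has the form
\[
\nu = w_j \, w_{j-1} \, \dots \, w_1 \, \mu_{|\mu|} \, \dots \, \mu_i
\]
for some $1 \leq i \leq |\mu|$, giving $|\nu| \geq j + 1 \geq k + 2$.

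Finally I invoke the definition of type $k$: every minimal forbidden word has length at most $k+1$, so a forbidden word of length $\geq k+2$ cannot be minimal. This contradicts the minimality of $\nu$ and finishes the argument. The main step to get right is identifying both endpoints of $\nu$ correctly so that the length bound $k+2$ is clean; the conditions $\mu \neq \mt$ and $n \geq k$ ensure that $\nu$ really does have letters on both sides of the cut.
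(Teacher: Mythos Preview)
Your argument is correct and is precisely the dual of the proof of Proposition~\ref{P: classes for sft}, which is exactly what the paper does (it simply states that the proof is the same once the words are reversed). Your identification of the minimal forbidden subword $\nu$, the two exclusion arguments forcing $\nu$ to straddle the position between $w_{k+1}$ and $w_k$ while reaching into $\mu$, and the resulting length bound $|\nu| \geq k+2$ all match the structure of the paper's earlier proof line by line.
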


\begin{proof}
The proof is the same to that of Proposition \ref{P: classes for sft} once the words are reversed.
\end{proof}

Therefore, in order to distinguish between the $F(\mu)$ it suffices to distinguish between the finite sets
\[
\{w \in \B_k(\La^*) \mid w \mu \in \La^*\}.
\]
Consequently we obtain the following algorithm for constructing the follower set graph of a language of type $k$.

\begin{algo}\label{A: fsg sft}
Let $\La^*$ be a language of finite type and let $\F$ be a finite set of forbidden words that defines $\La^*$.
The algorithm takes $\F$ as input and has output the follower set graph.
We have two cases:

\medskip

\noindent $\qedsymbol$ {\bf Case 1.} If $F = \mt$ then $\La^* = \bF_+^d$ and its follower set graph is the Hawaiian ring on $d$ edges (lines 1--9 in the pseudocode).

\medskip

\noindent $\qedsymbol$ {\bf Case 2.} If $F$ is non-empty we write
\[
k := \max\{|\mu| \mid \mu \in \F\} - 1.
\]
Then $\La^*$ is a language of type $k$ (lines 10--16 in the pseudocode).
We then generate the set $\B_k(\La^*)$ of allowable words of length $k$ (lines 17--24 in the pseudocode).
We then proceed to forming the follower set graph in two steps:

\smallskip

$\bullet$ {\bf Step 1:} Determining the vertex set $\{F(\mu) \mid \mu \in \La^*\}$ (lines 25--44 in the pseudocode).
It suffices to compute the $F(\mu)$ for $|\mu| \leq k$ (Proposition \ref{P: classes for sft}).
Nevertheless we will also compute the $F(\mu)$ for $|\mu| = k+1$.
This will be helpful for writing the edges.
To this end we construct a table indexed by $\B_k(\La^*)$ whose entries show whether an allowable word can follow another one.
If two rows are the same then the follower sets of the corresponding words determine the same vertex on the graph.
It is convenient to multi-label a vertex by using all the $F(\mu)$, for $|\mu| \leq k+1$, that coincide.

- Form a table indexed by $\B_k(\La^*)$ (followerTable).
We fix an enumeration $\{\mu_1, \dots, \mu_N\}$ of these words, including the void word which we set to be $\mu_1$.
For the $(\mu_i, \mu_j)$ entry of the table write $\mu_j \mu_i$.
If $\mu_j \mu_i \in \La^*$ then write TRUE for the entry; otherwise write FALSE.
In this way we create a second table (truthTable).
By Proposition \ref{P: finite right} these finite entries are sufficient for identifying the follower sets.

- Compute the $F(\mu)$ for $\mu$ of length $k+1$ (lines 45--56 in the pseudocode).
If $\mu$ is forbidden then do nothing.
If $\mu = \mu_{1} \mu_k \dots \mu_{k+1}$ is allowable then add the label $F(\mu)$ to the vertex that has $F(\mu_1 \dots \mu_{k})$ among its labels.

\smallskip

$\bullet$ {\bf Step 2:} Determining the edges of the follower set graph (lines 57--69 in the pseudocode).
We read the rows of the truthTable from top to bottom.
If we pass to a row that is the same with one already read, we move to the next (i.e. we identify the words with the same follower sets.)

- Start from $\mu_l = \mu_1 = \mt$ and repeat for $l = 2, \dots, N$.
Let $\mu_l$ be a word from $\{\mu_1, \dots, \mu_N\}$.
Start with $i=1$ and repeat for $i=2, \dots, d$.
Write an edge labeled $i$ from the vertex with label $F(\mu_l)$ to the vertex with label $F(i \mu_l)$ if there is a vertex that contains the label $F(i \mu_l)$.
Otherwise do nothing and go to $i + 1$.
After we finish for $i = d$ (the last step), we repeat for $\mu_{l + 1}$.
\end{algo}

\begin{remark}
In the next pages we give the pseudocode for the algorithm.
Notice that in the output each node is a set of words, and each word associated with a node generates the same follower set.
We remind that we were not concerned about the technical features of this algorithm, but more about that it does terminate.

{\small
\begin{algorithm}
\caption{Generate FollowerSetGraph(F, symbolSet)}

\DontPrintSemicolon
\SetKwInOut{Input}{Input}
\SetKwInOut{Output}{Output}

\vspace{2pt}

\Input{
$F$ = the list of forbidden words generating the language.\\
$symbolSet = \{1, \dots, d\}$. 
}
\Output{$Graph = (Nodes, Edges)$ where: \linebreak
-- each node in $Nodes$ is the set of words of length less than or equal to $k+1$ which have the same follower set; and \linebreak
-- each edge in $Edges$ is of the form \emph{(source, destination, label)}. 
}
\BlankLine

\tcp{Deal with the case that $F$ is the empty set.}
	\If{$F$ \upshape{ is empty}}{
		initialize $node$ as a list containing $symbolSet$ and the empty word\;
		initialize $nodes$ as a list containing only $node$\;
		initialize $edges$ as empty list\;
		\ForEach{$letter \in symbolSet$}{
			insert $(node, node, letter)$ into $edges$
		}
		${\bf return} (nodes, edges)$\;	
	}

\tcp{If $F$ is non-empty find value of $k$ from $F$.}
initialize $maxLength$ as 1\;
	\ForEach{$word \in F$}{
		\If{\upshape{length of} $word > maxLength$}{
			$maxLength \longleftarrow $ length of $word$
		}
	}
initialize $k$ as $maxLength - 1$\;

\tcp{Generate $mu$, the set of allowed words of length at most $k$.}

initialize $mu$ as a list containing only the empty word\;
	\ForEach{$n \in \{1,2,\dots, k\}$}{
		\ForEach{$word \in$ \upshape{set of all possible words over} $symbolSet$ \upshape{of length} $n$}{
			\If{$word$ \upshape{contains no element of} $F$ \upshape{as a subword}}{
				insert $word$ into $mu$
			}
		}
	}

\tcp{Form tables. Indexed by base word and concatenated word.}
	\ForEach{{\upshape word} $a \in mu$}{
		\ForEach{{\upshape word} $b \in mu$}{
			$followerTable[a][b]$ $\longleftarrow$ concatenate $b$ with $a$ \;
			\eIf{$followerTable[a][b]$ \upshape{contains no element of} $F$ \upshape{as a subword}}{
				$truthTable[a][b] \longleftarrow True$ }
			{	
				$truthTable[a][b] \longleftarrow False$
			}
		}
	}
\BlankLine
\end{algorithm}
}

{\small
\begin{algorithm}
\setcounter{AlgoLine}{34}
\caption{Generate FollowerSetGraph(F, symbolSet), continued}

\vspace{8pt}

\tcp{Each node of the graph is a set of words sharing the same follower set.}
	initialize $Nodes$ as empty list\;
	\ForEach{$row \in$ {\upshape unique rows of} $truthTable$}{
		initialize $node$ as an empty list\;
		\ForEach{{\upshape word} $a \in mu$}{
			\If{$truthTable[a] = row$}{
				insert $a$ into $node$\;
			}
		}
		insert $node$ into $Nodes$\;
	}
\BlankLine	

\tcp{Add words of length $k+1$ to the appropriate node.}
	\ForEach{$node \in Nodes$}{
		\ForEach{$word \in node$}{
			\If{\upshape{length of} $word$ = $k$}{
				\ForEach{$letter \in symbolSet$}{
					$newWord \longleftarrow$ concatenate $word$ with $letter$\;
					\If{$newWord$ \upshape{contains no element of} $F$ \upshape{as a subword}}{
						insert $newWord$ into $node$\;
					}
				}
			}
		}
	}	
\BlankLine	

\tcp{Create edges.}
initialize $Edges$ as empty list\;
	\ForEach{$node \in Nodes$}{
		\ForEach{$letter \in symbolSet$}{
			$newWord \longleftarrow$ concatenate $letter$ with any word in $node$ of length less than $k+1$\;
			\tcp{Find which node $newWord$ belongs to, if any}
			\ForEach{$node' \in Nodes$}{
				\If{$newWord \in node'$}{
					$edge \longleftarrow (node,node',letter)$\;
					insert $edge$ into $Edges$\;
					${\bf break}$\;
				}
			}
		}
	}
\BlankLine
	
${\bf return} (Nodes, Edges)$\;	
\end{algorithm}
}
\end{remark}

Let us illustrate with an example how the follower set graph is constructed from the Follower Set Graph Algorithm.

\begin{example}\label{E: first}
Let the symbol set be $\{0,1\}$.
Let $\La^*$ be the language defined by the words $\F = \{101, 110\}$.
We form the followerTable of the Follower Set Graph Algorithm with all the words of length at most two.
As we add words on the left, it is convenient to put the labels at the right of the followerTable:

\smallskip

\begin{center}
\begin{tabular}{|c|c|c|c|c|c|c||l|}
\hline
11 & 01 & 10 & 00 & 1 & 0 & $\mt$ & \\
\hline
\hline
11 & 01 & 10 & 00 & 1 & 0 & $\mt$ & $\mt$ \\
\hline
110 & 010 & 100 & 000 & 10 & 00 & 0  & 0 \\
\hline
111 & 011 & 101 & 001 & 11 & 01 & 1 & 1 \\
\hline
1100 & 0100 & 1000 & 0000 & 100 & 000 & 00 & 00 \\
\hline 
1110 & 0110 & 1010 & 0010 & 110 & 010 & 10 & 10 \\
\hline 
1101 & 0101 & 1001 & 0001 & 101 & 001 & 01 & 01 \\
\hline 
1111 & 0111 & 1011 & 0011 & 111 & 011 & 11 & 11 \\
\hline 
\end{tabular}
\end{center}

\smallskip

\noindent For simplicity we can just indicate the forbidden words by a black square in the truthTable of the Follower Set Graph Algorithm:

\smallskip

\begin{center}
\begin{tabular}{|c|c|c|c|c|c|c||l|}
\hline
11 & 01 & 10 & 00 & 1 & 0 & $\mt$ & \\
\hline
\hline
 &  &  &  &  &  &  & $\mt$ \\
\hline
$\qedsymbol$ &  &  &  &  &  &   & 0 \\
\hline
 & & $\qedsymbol$ &  &  &  &  & 1 \\
\hline
$\qedsymbol$ & & &  & &  &  & 00 \\
\hline 
$\qedsymbol$ & $\qedsymbol$ & $\qedsymbol$ &  & $\qedsymbol$ &  &  & 10 \\
\hline 
$\qedsymbol$ & $\qedsymbol$ &  &  & $\qedsymbol$ &  &  & 01 \\
\hline 
 &  & $\qedsymbol$ &  &  &  &  & 11 \\
\hline 
\end{tabular}

%
\end{center}

\smallskip

\noindent This gives the following five vertices:
\begin{align*}
& F(\mt), \\
& F(0) = F(00) = F(000) = F(001), \\
& F(1) = F(11) = F(111), \\
& F(10) = F(100), \\
& F(01) = F(010) = F(011),
\end{align*}
where we used Proposition \ref{P: classes for sft} to provide the classes for the allowable words of length $3$.
Notice here that $F(101)$ and $F(110)$ are meaningless as $101$ and $110$ are forbidden words.
Then the follower set graph is:
{\small
\[
\xymatrix@R=10mm@C=3mm{
& {F(\mt)} \ar@{-->}^{1}@/^1pc/[ddr] \ar@{->}_{0}@/_1pc/[ddl] & \\
& & \\
{F(0) = F(00) = F(000) = F(001)} \ar @`{(-35,5),(5,5)}^{0}  \ar@{-->}_{1}@/_1pc/[dd] & & {F(1) = F(11) = F(111)} \ar@{-->} @`{(50,5),(100,5)}^{1} \ar@{->}^{0}@/^1pc/[dd] \\
& & \\
{F(10) = F(100)} \ar@{->}_{0}@/_2pc/[rr] & & {F(01) = F(010) = F(011)} \ar@{->}_{0}@/_1pc/[uull] & \\
}
\]
}
\end{example}

By definition, the follower set graph is \emph{(left-)resolving}, i.e. different edges with the same source carry different labels.
Recall that a graph is called \emph{follower-separated} if distinct vertices have distinct follower sets (set of paths starting on the vertex).
This is also a property that the follower set graph has.
Indeed, if two vertices, say $F(\mu)$ and $F(\nu)$ have the same follower sets on the graph then every path $w$ that starts from $\mu$ can also start at $\nu$.
This shows that $w\mu \in \La^*$ if and only if $w \nu \in \La^*$ for all $w \in \La^*$, hence $F(\mu) = F(\nu)$.

Resolving graphs go by the name of \emph{Shannon graphs} in the literature.
Another type of a Shannon graph for subshifts is produced through the \emph{Krieger cover} \cite{Kri84}.
The Krieger cover is constructed by taking the follower sets on one-way infinite words.
This is the important difference with the follower set graph here, as we consider the vertices to be the follower sets on the \emph{finite} words.
For a nice exposition (among others) on the Krieger cover, the reader can see also \cite{Joh12}.

Given two languages we can consider their labeled graphs or the ambient unlabeled graphs.
Graph isomorphism within each class is translated to a different level of equivalence.

\subsection{Labeled graph isomorphism}

Let $\fG_1 = (G_1, \fL_1)$ and $\fG_2 = (G_2, \fL_2)$ be two edge-labeled graphs such that the labels $\fL_1, \fL_2$ take values on the same symbol set $\{1, \dots, d\}$.
The labeled graphs $\fG_1$ and $\fG_2$ are called \emph{labeled graph isomorphic} if there is a graph isomorphism $(\partial \ga, \ga) \colon G_1 \to G_2$ and a bijection $\pi$ on the symbol sets such that \[
\fL_2(\ga(e)) = \pi(\fL_1(e)) \text{ for all edges $e$}.
\]
(We use the notation $\partial\ga \colon G_1^{(0)} \to G_2^{(0)}$ and $\ga \colon G_1^{(1)} \to G_2^{(1)}$ for the graph isomorphism.)

\begin{proposition}\label{P: 1 block code}
Let $\La^*$ and $M^*$ be sofic languages.
Then their follower set graphs are isomorphic if and only if there is a bijection on their symbol sets that preserves the allowable words.
\end{proposition}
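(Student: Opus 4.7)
The plan is to prove the biconditional by exhibiting the natural candidate for the isomorphism in each direction, with the reverse direction being handled by a symmetry trick that avoids having to identify the image of a distinguished ``root'' vertex.

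For the forward direction, suppose $\pi \colon \{1,\dots,d\} \to \{1,\dots,d\}$ is a bijection of the symbol sets whose extension (letter by letter) to $\bF_+^d$ restricts to a bijection $\La^* \to M^*$. Define $\partial\ga(F_{\La^*}(\mu)) := F_{M^*}(\pi(\mu))$ on vertices. The key check is well-definedness: if $F_{\La^*}(\mu) = F_{\La^*}(\nu)$, then for any $u \in M^*$, writing $u = \pi(w)$ for some (unique) $w \in \La^*$, we have $u\pi(\mu) = \pi(w\mu) \in M^*$ iff $w\mu \in \La^*$ iff $w\nu \in \La^*$ iff $u\pi(\nu) \in M^*$, so $F_{M^*}(\pi(\mu)) = F_{M^*}(\pi(\nu))$. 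The same formula with $\pi^{-1}$ gives the inverse, so $\partial\ga$ is bijective on vertices. For edges: an edge labeled $i$ from $F_{\La^*}(\mu)$ to $F_{\La^*}(i\mu)$ exists iff $i\mu \in \La^*$ iff $\pi(i)\pi(\mu) = \pi(i\mu) \in M^*$, which is precisely the condition for an edge labeled $\pi(i)$ from $F_{M^*}(\pi(\mu))$ to $F_{M^*}(\pi(i\mu))$ in $\fG_{M^*}$.

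For the reverse direction, suppose $(\partial\ga,\ga)$ is a labeled graph isomorphism with associated symbol permutation $\pi$. The central observation is that, by construction of the follower set graph, a word $\mu_n \dots \mu_1$ lies in $\La^*$ if and only if the (necessarily unique, by resolvingness) path starting at the vertex $F_{\La^*}(\mt)$ and reading the labels $\mu_1, \mu_2, \dots, \mu_n$ in this order exists — and such a path then terminates at $F_{\La^*}(\mu_n \dots \mu_1)$. Let $\nu \in M^*$ be such that $\partial\ga(F_{\La^*}(\mt)) = F_{M^*}(\nu)$. The labeled isomorphism converts paths out of $F_{\La^*}(\mt)$ labeled $(\mu_1,\dots,\mu_n)$ to paths out of $F_{M^*}(\nu)$ labeled $(\pi(\mu_1),\dots,\pi(\mu_n))$, so applying the same characterization on the $M^*$ side yields $\pi(\La^*) = F_{M^*}(\nu) \subseteq M^*$. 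The main obstacle to concluding is that we have no way to force $\nu$ to be $\mt$ (or even equivalent to it); the workaround is to apply the identical argument to the inverse isomorphism $(\partial\ga^{-1}, \ga^{-1})$ with symbol permutation $\pi^{-1}$, obtaining $\pi^{-1}(M^*) \subseteq \La^*$, hence $M^* \subseteq \pi(\La^*)$. Combining both inclusions gives $\pi(\La^*) = M^*$, as required.
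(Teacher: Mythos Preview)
Your proof is correct and follows essentially the same approach as the paper's. In both arguments the bijection-to-isomorphism direction is handled by the map $F_{\La^*}(\mu) \mapsto F_{M^*}(\pi(\mu))$ with the same well-definedness check, and the isomorphism-to-bijection direction is handled by tracking labeled paths out of $F_{\La^*}(\mt)$ (which lands at some $F_{M^*}(\nu)$), obtaining $\pi(\La^*) \subseteq M^*$, and then invoking the inverse isomorphism with label permutation $\pi^{-1}$ for the reverse inclusion. The only cosmetic difference is that the paper concludes $\pi(\mu) \in M^*$ from $\pi(\mu)\nu \in M^*$ via factoriality of $M^*$, whereas you phrase the same step as $\pi(\La^*) = F_{M^*}(\nu) \subseteq M^*$; these are equivalent observations.
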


\begin{proof}
Suppose there is an edge-labeled graph isomorphism.
By construction there is at least one vertex, i.e. the vertex $F_{\La^*}(\mt)$ (resp. $F_{M^*}(\mt)$), that emits edges of all labels.
Hence $\max\{s^{-1}(v) | v \text{ vertex}\}$ coincides with the number of the symbols, thus $d = d'$.
The graph isomorphism identifies $F_{\La^*}(\mt)$ with some $F_{M^*}(\nu)$.
Then the edge $e = (F_{\La^*}(\mt), F_{\La^*}(i))$ labeled $i$ corresponds to the edge
\[
\ga(e) = (F_{M^*}(\nu), F_{M^*}(\pi(i)\nu))
\] labeled $\pi(i)$.
Suppose $ji \in \La^*$.
Then the edge $f = (F_{\La^*}(i), F_{\La^*}(ji))$ labeled $j$ corresponds to the edge 
\[
\ga(f) = (F_{M^*}(\pi(i)\nu), F_{M^*}(\pi(j)\pi(i)\nu))
\]
labeled $\pi(j)$.
Therefore we get that $\pi(j) \pi(i) \in M^*$.
Hence $\pi$ respects the allowable words of length $2$.
Inductively we get that permutation of the edges along the vertices extends to a match on the allowable words of any length.
Notice here that $\pi^{-1}$ is the associated labeling for the inverse graph isomorphism.

Conversely, the existence of a bijection $\pi$ on the symbol sets implies that $d = d'$.
Furthermore $\pi$ extends to the allowable words such that $\pi(\mu \nu) = \pi(\mu) \pi(\nu)$.
We define the edge-labeled graph homomorphism by
\[
\partial\ga(F_{\La^*}(\mu)):= F_{M^*}(\pi(\mu)).
\]
The map $\partial\ga$ is well defined.
Indeed if $F_{\La^*}(\mu) = F_{\La^*}(\nu)$ then we get that
\[
\{w \in \La^* \mid w \mu \in \La^*\} = \{ w \in \La^* \mid w \nu \in \La^*\}.
\]
Applying $\pi$ and working backwards we have that $F_{M^*}(\pi(\mu)) = F_{M^*}(\pi(\nu))$.
It follows that $\partial\ga$ is also one-to-one, hence a bijection due to the properties of $\pi$.
If there is an edge $e = (F_{\La^*}(\mu), F_{\La^*}(i\mu))$ labeled $i$ then we have that $i\mu \in \La^*$ and thus $\pi(i \mu) = \pi(i) \pi(\mu) \in M^*$.
Therefore there is an edge labeled $\pi(i)$ connecting $F_{M^*}(\pi(\mu))$ and $F_{M^*}(\pi(i\mu))$; we write $\ga(e)$ for that edge.
Since the edges on the same source have different labels we have that $\ga(e)$ is unique.
In this way, we extend $\partial\ga$ to a map $\ga$ on the edges.
Then $(\partial\ga, \ga)$ gives the required edge-labeled graph isomorphism.
\end{proof}

\begin{remark}
Proposition \ref{P: 1 block code} is straightforward when $\La^*$ and $M^*$ are languages of subshifts.
In this case $(X_{\La^*})^* = \La^*$ and the bijection gives a $1$-block code between the subshifts.
However here we provide the same result even when $(X_{\La^*})^* \subset \La^*$ and with no reference to possible subshifts that the languages induce.
\end{remark}

\subsection{Unlabeled graph isomorphisms}

By an unlabeled graph isomorphism between two edge-labeled graphs we will mean a simple isomorphism of the ambient graphs with the labels omitted.

\begin{theorem}\label{T:unlbl iso}
Let $\La^*$ and $M^*$ be sofic languages.
Then their unlabeled follower set graphs are isomorphic if and only if their quantized dynamics are locally piecewise conjugate.
\end{theorem}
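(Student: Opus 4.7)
I will prove both directions separately, exploiting the fact that for sofic languages both $\Om_{\La^*}$ and $\Om_{M^*}$ are finite discrete spaces by Proposition \ref{P: finite A}. In particular, the ``local'' condition in Definition \ref{D: QPloc} may be verified on singleton neighborhoods $\U = \{\om\}$, which aligns the analytic statement with the combinatorial graph data. Throughout I use the identification (from the start of Section 5) between the vertex set of the follower set graph and the spectrum $\Om$.

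For the forward direction, suppose $(\partial\ga, \ga)$ is an unlabeled graph isomorphism from $G_{\La^*}$ to $G_{M^*}$. The vertex map $\partial\ga$ then gives a bijection $\ga_s \colon \Om_{\La^*} \to \Om_{M^*}$, which is automatically a homeomorphism by discreteness. For $\om = [\mu] \in Q_{[m]}$, the vertex $F(\mu)$ has out-degree $|\supp[m]|$; since unlabeled graph isomorphisms preserve out-degree, $\ga_s(\om)$ lies in some $P_{[n]}$ with $|\supp[n]| = |\supp[m]|$. For each $i \in \supp[m]$ the unique edge at $F(\mu)$ labeled $i$ is mapped by $\ga$ to an edge at $F(\ga_s([\mu]))$ carrying some label $\pi(i) \in \supp[n]$; the assignment $\pi \colon \supp[m] \to \supp[n]$ is a bijection because the target graph is resolving and $\ga$ restricts to a bijection on out-edges. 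Taking $\U = \{\om\}$, the required equality $\ga_s\vpi_i(\om) = \psi_{\pi(i)}\ga_s(\om)$ unpacks to $\ga_s([i\mu]) = [\pi(i)\nu]$ (where $\ga_s([\mu]) = [\nu]$), which is precisely the statement that $\ga$ maps the edge from $F(\mu)$ to $F(i\mu)$ onto the edge from $F(\nu)$ to $F(\pi(i)\nu)$.

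For the reverse direction, suppose $\ga_s$ witnesses $Q$-$P$-local piecewise conjugacy. Define $\partial\ga(F(\mu)) := F(\ga_s([\mu]))$, which is a well-defined bijection on vertex sets by the homeomorphism assumption. For each edge $e$ from $F(\mu)$ to $F(i\mu)$ labeled $i$ (so $[\mu] \in Q_{[m]}$ with $i \in \supp[m]$), apply the local condition at $\U = \{[\mu]\}$ to obtain a bijection $\pi$; the remark following Definition \ref{D: QPloc} guarantees that the value $\pi(i)$ is independent of the particular choice of admissible $\pi$, so setting $\ga(e)$ to be the out-edge at $F(\ga_s([\mu]))$ labeled $\pi(i)$ is unambiguous. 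The conjugacy identity
\[
\psi_{\pi(i)}\ga_s([\mu]) = \ga_s\vpi_i([\mu]) = \ga_s([i\mu])
\]
simultaneously ensures that this out-edge exists in $G_{M^*}$ and that its target is $\partial\ga(F(i\mu))$, so $\ga$ respects sources and targets. Bijectivity of $\ga$ on the edge sets follows because, at each vertex, $\pi$ bijects $\supp[m]$ with $\supp[n]$, matching out-edges of $F(\mu)$ and $F(\ga_s([\mu]))$ one-to-one; assembling these local bijections over the finitely many vertices yields the required global bijection on edges.

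The only step demanding attention beyond routine bookkeeping is ensuring that the pointwise bijections $\pi$ in the reverse direction glue into a well-defined edge map. Discreteness of the sofic spectra collapses the ``local'' condition to a genuinely pointwise one, and the consistency observation after Definition \ref{D: QPloc} fixes $\pi$ canonically at each point; both directions then amount to translating back and forth between the local conjugacy data at $\om$ and the out-edge structure at the corresponding vertex.
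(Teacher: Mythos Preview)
Your argument is correct and follows essentially the same route as the paper's proof: both directions exploit that soficity makes $\Om_{\La^*}$ and $\Om_{M^*}$ finite discrete, reduce the local condition to singleton neighbourhoods, and translate between the out-edge structure at a vertex and the partial maps $\vpi_i$, $\psi_j$.

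One small overreach in the reverse direction: the remark after Definition \ref{D: QPloc} only yields $\psi_{\pi(i)}\ga_s(\om) = \psi_{\pi'(i)}\ga_s(\om)$, i.e.\ the \emph{target vertex} of the image edge is canonical, not the label $\pi(i)$ itself. Since follower set graphs can have parallel edges (e.g.\ Remark \ref{R: sft 1 symb 2}), two admissible bijections $\pi,\pi'$ may assign different labels and hence different out-edges to $e$. This does not damage the proof---simply fix one admissible $\pi$ at each vertex, as the paper does implicitly, and the resulting edge map is a valid unlabeled graph isomorphism---but your appeal to unambiguity of $\pi(i)$ is not quite justified as stated.
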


\begin{proof}
By assumption the spaces $\Om_{\La^*}$ and $\Om_{M^*}$ are (discrete) finite spaces.
First suppose that the unlabeled follower set graphs are isomorphic.
Then we obtain an induced bijection $\ga_s$ on the vertices, and thus a homeomorphism $\ga_s \colon \Om_{\La^*} \to \Om_{M^*}$.
Let a point $\om \in Q_{[m]}$.
Then the number of the edges emitted by $\om$ coincides with the number of edges emitted by $\ga_s(\om)$.
Due to the follower set graph construction, this implies that $\ga_s(\om) \in P_{[n]}$, with $|\supp [n]|$ coinciding with the number of emitted edges from $\ga_s(\om)$, thus with $|\supp [m]|$.
Now the graph isomorphism implies a bijection, say $\pi$, between edges.
For convenience let $\supp [m] = \{i_1, \dots, i_r\}$ and $\supp [n] = \{j_1, \dots, j_r\}$, such that $\pi(i_l) = j_l$ for all $l=1, \dots r$.
Then the terminal vertex of $i_l$ is mapped to the terminal of $\pi(i_l)$.
Consequently we obtain
\[
\ga_s \phi_{i_l}(\om) = \psi_{\pi(i_l)} \ga_s(\om) \foral l = 1, \dots, r.
\]
Taking $\U = \{\om\}$ gives the required local piecewise conjugacy.

Conversely suppose that $(\Om_{\La^*}, \vpi)$ and $(\Om_{M^*}, \psi)$ are locally piecewise conjugate.
Then the ambient spaces are homeomorphic, i.e. there is a bijection on the vertex sets.
Moreover for every point $\om \in Q_{[m]}$ we have $\ga_s(\om) \in P_{[n]}$ with $|\supp [m]| = |\supp [n]|$, such that
\begin{align*}
\ga_s \vpi_i(\om) = \psi_{\pi(i)} \ga_s(\om) \foral i \in \supp [m].
\end{align*}
Recall that $\vpi_i$ is defined on $\om$ if and only if $i \in \supp [m]$.
Hence local piecewise conjugacy gives that the number of edges that $\om$ emits is $|\supp [m]|$, and thus it equals the number of edges that $\ga_s(\om)$ emits, which is $|\supp [n]|$.
The above equation then shows that for the edge labeled $i$ there is a unique edge labeled $\pi(i)$, such that the end point of the $i$-edge is mapped to the endpoint of the $\pi(i)$-edge.
As edges are preserved under $\ga_s$ we get the required graph isomorphism.
\end{proof}

\begin{remark}
There is a conceptual difference between labeled and unlabeled graph isomorphism.
In the first case the isomorphism on the edges is given by the bijection on the labels, and is the same on all vertices.
However in the second case the bijection changes each time we pass to another vertex.
\end{remark}

Removing the labels from a graph representation $\fG = (G, \fL)$ of a subshift $\La$ does not preserve type.
In the case of sofic subshifts this procedure amounts to producing finite covers and hints that unlabeled graph isomorphisms of the follower set graphs \emph{should not}\footnote{\ 
Notice the subtle point here: 
If $\La$ is a sofic subshift then removing all labels from its follower set graph $\fG = (G, \fL)$ produces an edge-shift $X_G$, and $\La$ is a factor of $X_G$.
However it is not ensured that $G$ is the follower set graph of $X_G$.
} preserve SFT's.
The following example clarifies this point.

\begin{example}\label{E: even vs sft}
Let $\La^*$ be the language of the even shift on $\{0,1\}$, i.e. the forbidden words are of the form $10^{2n+1}1$ for all $n\geq 0$.
Then the vertices of the follower set graph are of the form
\[
F(\mu)
=
\begin{cases}
F(\mt) & \text{ if $\mu$ contains no $1$'s},\\
F(1) & \text{ if $\mu$ begins with $0^{2k}1$ for some $k \geq 0$}, \\
F(01) & \text{ if $\mu$ begins with $0^{2k + 1}1$ for some $k \geq 0$}.
\end{cases}
\]
for $\La^*$.
Therefore the follower set graph takes up the form
{\small
\[
\xymatrix@R=30pt{
& & F(\mt) \ar@{->}^{0}@(r,dr) \ar@{-->}^{1}@/_1pc/[ddll] & & \\
& & & & \\
F(1) \ar@{-->}_{1}@(ul,l) \ar@{->}^{0}@/^1pc/[rrrr] & & & & F(01) \ar@{->}^{0}@/^1pc/[llll]
}
\]
}For more details see \cite[Example 3.2.7, Figure 3.2.2]{LM95}.

On the other hand let $M^*$ on $\{0,1\}$ be defined by the forbidden word $001$.
Then the truthTable of the Follower Set Graph Algorithm is given by:

\smallskip

\begin{center}
\begin{tabular}{|c|c|c|c|c|c|c||l|}
\hline
11 & 01 & 10 & 00 & 1 & 0 & $\mt$ & \\
\hline
\hline
& & & & & & & $\mt$ \\
\hline
& & & & & & & 0 \\
\hline
& & & $\qedsymbol$ & & & & 1 \\
\hline
& & & & & & & 00 \\
\hline 
& & & $\qedsymbol$ & & & & 10 \\
\hline 
& & $\qedsymbol$ & $\qedsymbol$ & & $\qedsymbol$ & & 01 \\
\hline 
& & & $\qedsymbol$ & & & & 11 \\
\hline 
\end{tabular}
\end{center}

\smallskip

\noindent (with FALSE indicated by a black box).
Therefore we have the vertices
\begin{align*}
F(\mt) & = F(0) = F(00) = F(000), \\
F(1) & = F(10) = F(11) = F(100) = F(101) = F(110) = F(111), \\
F(01) & = F(010) = F(011) .
\end{align*}
Then the follower set graph for $M^*$ is
{\small
\[
\xymatrix@R=30pt{
& & F(\mt) \ar@{->}^{0}@(r,dr) \ar@{-->}^{1}@/_1pc/[ddll] & & \\
& & & & \\
F(1) \ar@{-->}_{1}@(ul,l) \ar@{->}^{0}@/^1pc/[rrrr] & & & & F(01) \ar@{-->}^{1}@/^1pc/[llll]
}
\]
}
It is immediate that the unlabeled graphs for $\La^*$ and $M^*$ are isomorphic.
However $\La^*$ is not a language of finite type.

Notice also how the functions $f_n \colon \B_n(\La^*) \to \B_n(M^*)$ of Remark \ref{R: fn} depend on $n$ in this example.
For example, we have that
\begin{align*}
f_4(0010) = 1010 \neq 0 010 = f_1(0) f_3(010) .
\end{align*}
Recall that the image of $w$ under $f_n$ is taken through the unlabeled graph isomorphism, when $w$ corresponds to \emph{the} path $w$ beginning at $\om = F(\mt)$.
\end{example}

\begin{remark}\label{R:zeta fun 2sided}
Recall that a point $x$ in a two-sided subshift $\La$ has \emph{period $n$} if there exists an $n \in \bN$ such that $\si^n(x) = x$.
The number of points with period $n$ is denoted by $p_n(\La)$.
The \emph{zeta function} of $\La$ is given by
\[
\zeta_{\La}(t) := \exp ( \, \sum_{n=1}^\infty \frac{p_n(\La)}{n} t^n \,).
\]

The zeta function is not preserved by local piecewise conjugacy.
In Example \ref{E: even vs sft} we show that the even shift is locally piecewise conjugate to a subshift of finite type.
By \cite[Theorem 6.4.6]{LM95} the zeta function of any subshift of finite type is the reciprocal of a polynomial, whereas \cite[Example 6.4.5]{LM95} implies that the zeta function of the even shift is $\zeta(t) = (1+t)(1-t-t^2)^{-1}$.
We would like to thank Ian Putnam for this remark.
\end{remark}

\subsection{Graph isomorphism for type $1$ languages}

Let us examine further the case of type $1$ languages.
We begin with an example of two languages with no isomorphic labeled follower set graphs.

\begin{example}\label{E:not same sft}
Consider the language $\La^*$ on $5$ symbols determined by the forbidden words
\[
\{11, 21, 31, 41, 12, 22, 32, 42, 13, 33, 24, 44\}.
\]
Then the resulting follower set graph is
{\small
\[
\xymatrix@R=5.5mm@C=30mm{
& F(\mt) = F(5) \ar@{->}_{1}@/_3pc/[dddd] \ar@{->}^{2}@/^3pc/[dddd] \ar@{->}^{4}@/^1pc/[ddr]  \ar@{->}_{3}@/_1pc/[ddl] \ar@{->}^{5}@(ul,ur) & \\
& & \\
F(3) \ar@{->}_{2}@/_1pc/[ddr] \ar@{->}^{4 \qquad}@/^2pc/[rr] \ar@{->}^{5}@/^3pc/[uur] & & F(4) \ar@{->}^{1}@/^1pc/[ddl] \ar@{->}^{\qquad 3}@/^2pc/[ll] \ar@{->}_{5}@/_3pc/[uul]\\
& & \\
& F(1) = F(2) \ar@{->}^{5}@/_0pc/[uuuu] & \\
}
\]
}for $\La^*$.
Consider also the language $M^*$ on $5$ symbols determined by the forbidden words
\[
\{11, 21, 31, 41, 12, 22, 32, 42, 13, 33, 14, 44\}.
\]
The only difference with the forbidden words of $\La^*$ is to consider $14$ in place of $24$.
Similarly we get the follower set graph
{\small
\[
\xymatrix@R=5.5mm@C=30mm{
& F(\mt) = F(5) \ar@{->}_{1}@/_3pc/[dddd] \ar@{->}^{2}@/^3pc/[dddd] \ar@{->}^{4}@/^1pc/[ddr]  \ar@{->}_{3}@/_1pc/[ddl] \ar@{->}^{5}@(ul,ur) & \\
& & \\
F(3) \ar@{->}_{2}@/_1pc/[ddr] \ar@{->}^{4 \qquad}@/^2pc/[rr] \ar@{->}^{5}@/^3pc/[uur] & & F(4) \ar@{->}^{2}@/^1pc/[ddl] \ar@{->}^{\qquad 3}@/^2pc/[ll] \ar@{->}_{5}@/_3pc/[uul]\\
& & \\
& F(1) = F(2) \ar@{->}^{5}@/_0pc/[uuuu] & \\
}
\]
}for $M^*$.
The unlabeled graphs are isomorphic.
The only difference in the labeled graphs is the lower right arrow which carries different labels.

As the follower set graphs are irreducible it can be seen that the languages $\La^*$ and $M^*$ are also the languages of two-sided irreducible subshifts.
In fact these are the augmented versions of \cite[Example 9.8]{KS15}.
\end{example}

A key feature in the example above is that there are two symbols that have the same follower sets.
It appears that this is the only obstruction.

\begin{theorem}\label{T: same graph iso}
Let $\La^*$ and $M^*$ be languages of type $1$.
Suppose that $\La^*$ is on $d$ symbols and that $F_{\La^*}(i) \neq F_{\La^*}(j)$ for $i \neq j$, with $i,j = 1, \dots d$.
The following are equivalent:
\begin{enumerate}
\item The follower set graphs are isomorphic;
\item The unlabeled follower set graphs are isomorphic.
\end{enumerate}
\end{theorem}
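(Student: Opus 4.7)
The implication (i)$\Rightarrow$(ii) is immediate by forgetting labels, so the content lies in the converse. For (ii)$\Rightarrow$(i), the plan exploits a defining structural feature of type $1$ follower set graphs: by Proposition \ref{P: classes for sft} every edge labeled $i$ terminates at the vertex $F(i)$, so under the injectivity hypothesis the label of an edge is recoverable from its terminal vertex. This will reduce the task to showing that an unlabeled isomorphism $\partial\ga$ restricts to a bijection between the letter vertices of the two graphs. Writing $G(\nu):=F_{M^*}(\nu)$, the required labeling permutation $\pi$ will then be forced by setting $\partial\ga(F(i))=G(\pi(i))$ and the compatibility $\fL_M\circ\ga=\pi\circ\fL_{\La}$ will follow by matching terminal vertices.

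The first step is to characterise $F(\mt)$ graph-theoretically as the unique vertex of out-degree $d$. One direction is clear since $F(\mt)$ emits an edge labeled $j$ to $F(j)$ for every letter $j$. Conversely, a letter vertex $F(k)$ of out-degree $d$ satisfies $jk\in\La^*$ for every letter $j$; since in type $1$ the equivalence class $[k]_1$ is determined by the set $\{j:jk\in\La^*\}$ of letters that can precede $k$, this forces $[k]_1=[\mt]_1$ and therefore $F(k)=F(\mt)$ as a vertex of the graph. This characterisation makes no use of the injectivity hypothesis, so it applies verbatim in $M^*$; hence $\partial\ga$ must send $F(\mt)$ to $G(\mt)$.

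The second step transports the injectivity hypothesis from $\La^*$ to $M^*$. Under injectivity, the $d$ edges leaving $F(\mt)$ reach $d$ pairwise distinct vertices $F(1),\dots,F(d)$ (one of which may be $F(\mt)$ itself, contributing a single self-loop); in particular the unlabeled graph has no parallel edges out of $F(\mt)$. Transporting through $\partial\ga$, the vertex $G(\mt)$ too has its $d$ outgoing edges going to $d$ pairwise distinct targets. Those targets are by construction $G(1),\dots,G(d)$, so $G(i)\neq G(j)$ for $i\neq j$. Thus $M^*$ inherits the injectivity hypothesis, and $\partial\ga$ restricts to a bijection $\{F(i)\}_{i=1}^d\to\{G(j)\}_{j=1}^d$; the recipe $\partial\ga(F(i))=G(\pi(i))$ then defines a permutation $\pi$ of the symbol set.

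It remains to verify that $(\partial\ga,\pi)$ is a labeled isomorphism. Any edge $e=(F(\mu),F(i))$ of $\La^*$'s graph is labeled $i$; its image $\ga(e)$ terminates at $\partial\ga(F(i))=G(\pi(i))$, and hence carries the label $\pi(i)$ by the terminal-determines-label rule applied now in $M^*$. This yields $\fL_M(\ga(e))=\pi(\fL_{\La}(e))$. The step I expect to demand the most care is the graph-theoretic identification of $F(\mt)$ in the presence of the possible coincidence $F(\mt)=F(i_0)$: here $F(\mt)$ already bears a self-loop and a priori a competing letter vertex could share the maximum out-degree, but this is exactly what the type $1$ description of $[\mu]_1$ via $\{j:j\mu\in\La^*\}$ rules out.
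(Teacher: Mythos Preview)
Your proposal is correct, and the endgame (defining $\pi$ via the bijection on letter vertices and reading labels off terminal vertices) matches the paper exactly. The difference lies in how $F(\mt)$ is pinned down under the unlabeled isomorphism. The paper splits into two cases: when $F_{\La^*}(\mt)\neq F_{\La^*}(i)$ for all $i$, it observes that both graphs have $d+1$ vertices and identifies $F(\mt)$ as the unique \emph{source}; when $F_{\La^*}(\mt)=F_{\La^*}(i_0)$ for some $i_0$, it uses the vertex count $d$ to force the same coincidence on the $M^*$ side. Your route is more uniform: you characterise $F(\mt)$ directly as the unique vertex of maximal out-degree, via the type~$1$ description $[k]_1=\{j:jk\in\La^*\}$, which handles both cases at once and simultaneously yields $d=d'$ and the transport of injectivity to $M^*$. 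This avoids the case split at no cost; the paper's division is perhaps more concrete but buys nothing additional. One small point you leave implicit is the equality $d=d'$ of symbol counts (needed so that ``out-degree $d$'' means the same thing on both sides); this follows immediately since graph isomorphism preserves maximal out-degree, but it would be worth one sentence.
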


\begin{proof}
Of course item (i) implies item (ii).
For the converse recall that unlabeled graph isomorphism imposes that $M^*$ is on $d$ symbols as well.
Furthermore the vertex sets must have the same size.

We have two cases.
If $F_{\La^*}(i) \neq F_{\La^*}(\mt)$ for all $i$ then the same must hold for the graph of $M^*$, as $d+1$ is the maximum size of the vertex sets.
In this case both $F_{\La^*}(\mt)$ and $F_{M^*}(\mt)$ are the unique sources for the graph, hence related by the graph isomorphism.
If there is an $i$ such that $F_{\La^*}(i) = F_{\La^*}(\mt)$ then the number of the $F_{M^*}(j)$ is $d$, and thus there exists a unique $j$ such that $F_{M^*}(j) = F_{M^*}(\mt)$.

In any case we get that the graph isomorphism induces a bijection between $\{F_{\La^*}(i) \mid i = 1, \dots, d\}$ and $\{ F_{M^*}(i) \mid i= 1, \dots, d \}$.
Without loss of generality we may relabel for $M^*$ so that this bijection sends $F_{\La^*}(i)$ to $F_{M^*}(i)$.
As we remarked in Example \ref{E: sft 1} the labels on the edges are pre-determined by their range.
Notice that by hypothesis every vertex receives at most one edge.
Therefore the unlabeled graph isomorphism respects the label of the edges, and the proof is complete.
\end{proof}

\begin{remark}
For type $1$ subshifts there is a strong connection between the followerTable of the Follower Set Graph Algorithm and the representation of the subshift as an edge shift.
Let us recall how this follows from \cite[Theorem 2.3.2]{LM95}.
To allow comparisons we denote the graph of the edge shift by $G_e(\La)$.

Let $\La$ be a two-sided subshift of type $1$.
The vertices of $G_e(\La)$ are the symbols of $\La$.
We write an edge between the vertex $i$ and $j$ if (and only if) $ji \in \La^*$, and label the resulting edge by $j$.
It is evident that this graph is given by a $0$-$1$ adjacency matrix.

If $A$ is the followerTable of the Follower Set Graph Algorithm where we replace the allowable words by $1$ and the forbidden words by $0$, then the adjacency matrix of $G_e(\La)$ is taken by deleting the row and the column that corresponds to $\mt$ in $A$ (which all have entries equal to $1$).

Working under the condition that $F$ is one-to-one on the symbol set (of Theorem \ref{T: same graph iso}) we distinguish two cases:

$\bullet$ Case 1. If $F(\mt) = F(i)$ for some symbol $i$ then $G_e(\La)$ coincides with the graph $G$ of the follower set graph $\fG = (G, \fL)$ of $\La$.

$\bullet$ Case 2. If $F(\mt) \neq F(i)$ for all symbols $i$ then $G_e(\La)$ coincides with the subgraph $G$ of the follower set graph $\fG = (G, \fL)$ of $\La$, once we erase the vertex $F(\mt)$ and the emitting edges.
\end{remark}

\begin{remark}
Theorem \ref{T: same graph iso} applies to edge shifts with invertible adjacency matrices.
Indeed let $A_e$ be the adjacency matrix of $G_e(\La)$.
If there are $i \neq j$ with $F(i) = F(j)$ then we have that two rows of the matrix from the Follower Set Graph Algorithm coincide.
Thus the same holds for $A_e$ and thus $\det A_e =0$.
\end{remark}

\begin{remark}\label{R: sft 1 symb 2}
Theorem \ref{T: same graph iso} holds in the particular case of languages of type $1$ on two symbols $\{0, 1\}$ without the assumption on the follower set function.
Indeed the cases where $F(0) = F(1)$ produce the following graphs
{\small
\[
\xymatrix{
F(\mt) \ar@{->}^{0}@(ur,dr) \ar@{->}_{1}@(ul,dl) & & F(\mt) \ar@{->}^{0}@/^1pc/[rr] \ar@{->}_{1}@/_1pc/[rr] & & F(0) = F(1)
}
\]
}
for the sets of forbidden words $\mt$ and $\{00, 10, 01, 11\}$, and 
{\small
\[
\xymatrix{
F(\mt) \ar@{->}^{0}@/^1pc/[rr] \ar@{->}_{1}@/_1pc/[rr] & & F(0) = F(1) \ar@{->}^{0}@(r,dr) & & F(\mt) \ar@{->}^{0}@/^1pc/[rr] \ar@{->}_{1}@/_1pc/[rr] & & F(0) = F(1) \ar@{->}^{1}@(r,dr)
}
\]
}for the sets of forbidden words $\{00, 01\}$ and $\{11, 10\}$, respectively.
It follows that also in these cases the follower set graphs are unique up to a permutation of symbols.
\end{remark}

However one direction of Theorem \ref{T: same graph iso} does not hold in general even for languages of finite type.
We highlight this in the following two examples.

\begin{example}\label{E:counter1}
Let the language $\La^*$ on two symbols $\{0,1\}$ be defined by the forbidden words
\[
000, 010, 001, 101, 011.
\]
Then the corresponding truthTable of the Follower Set Graph Algorithm is:

\smallskip

\begin{center}
\begin{tabular}{|c|c|c|c|c|c|c||l|}
\hline
11 & 01 & 10 & 00 & 1 & 0 & $\mt$ & \\
\hline
\hline
 &  &  &  &  &  &  & $\mt$ \\
\hline
 & $\qedsymbol$ &  & $\qedsymbol$ &  &  &  & 0 \\
\hline
 & $\qedsymbol$ & $\qedsymbol$ & $\qedsymbol$ &  &  &  & 1 \\
\hline
 & $\qedsymbol$ & $\qedsymbol$ & $\qedsymbol$ &  & $\qedsymbol$ &  & 00 \\
\hline 
 & $\qedsymbol$ & $\qedsymbol$ & $\qedsymbol$ & & $\qedsymbol$ & & 10 \\
\hline 
$\qedsymbol$ & $\qedsymbol$ & $\qedsymbol$ & $\qedsymbol$ & $\qedsymbol$ & $\qedsymbol$ &  & 01 \\
\hline 
 & $\qedsymbol$ & $\qedsymbol$ & $\qedsymbol$ & & $\qedsymbol$ & & 11 \\
\hline 
\end{tabular}
\end{center}

\smallskip

\noindent and gives the vertices
\begin{align*}
& v_0 = F(\mt), v_1 = F(1), v_2 = F(0), v_3 = F(01), \\
& v_4 = F(00) = F(10) = F(11) = F(100) = F(110) = F(111).
\end{align*}
Then the follower set graph of $\La^*$ is 

\smallskip

{\small
\[
\xymatrix@R=10mm@C=20mm{
v_2  \ar@{->}_{0}@/_1pc/[dd] \ar@{-->}^{1}@/^1pc/[dd] & v_0 \ar@{-->}^{1}@/^1pc/[r] \ar@{->}_{0}@/_1pc/[l] & v_1 \ar@{->}^{0}@/^1pc/[dd] \ar@{-->}_{1}@/^1pc/[ddll] \\
& & \\
v_4 \ar@{-->} @`{(-25,-45),(-25,-5)}^{1} & & v_3
}
\]
}

\bigskip

\noindent On the other hand let the language $M^*$ on two symbols $\{0,1\}$ be defined by the forbidden words
\[
000, 010, 001, 100, 011.
\]
Then the corresponding truthTable of the Follower Set Graph Algorithm is:

\smallskip

\begin{center}
\begin{tabular}{|c|c|c|c|c|c|c||l|}
\hline
11 & 01 & 10 & 00 & 1 & 0 & $\mt$ & \\
\hline
\hline
 &  &  &  &  &  & & $\mt$ \\
\hline
 & $\qedsymbol$ & $\qedsymbol$ & $\qedsymbol$ &  &  &  & 0 \\
\hline
 & $\qedsymbol$ &  & $\qedsymbol$ &  &  &  & 1 \\
\hline
$\qedsymbol$ & $\qedsymbol$ & $\qedsymbol$ & $\qedsymbol$ & $\qedsymbol$ & $\qedsymbol$ &  & 00 \\
\hline 
 & $\qedsymbol$ & $\qedsymbol$ & $\qedsymbol$ & & $\qedsymbol$ & & 10 \\
\hline 
 & $\qedsymbol$ & $\qedsymbol$ & $\qedsymbol$ & & $\qedsymbol$ & & 01 \\
\hline 
 & $\qedsymbol$ & $\qedsymbol$ & $\qedsymbol$ & & $\qedsymbol$ & & 11 \\
\hline 
\end{tabular}
\end{center}

\smallskip

\noindent and gives the vertices
\begin{align*}
& w_0 = F(\mt), w_1 = F(0), w_2 = F(1), w_3 = F(00), \\
& w_4 = F(10) = F(01) = F(11) = F(101) = F(111) = F(110).
\end{align*}
Then the follower set graph of $M^*$ is 

\smallskip

{\small
\[
\xymatrix@R=10mm@C=20mm{
w_2  \ar@{->}_{0}@/_1pc/[dd] \ar@{-->}^{1}@/^1pc/[dd] & w_0 \ar@{->}^{0}@/^1pc/[r] \ar@{-->}_{1}@/_1pc/[l] & w_1 \ar@{->}^{0}@/^1pc/[dd] \ar@{-->}_{1}@/^1pc/[ddll] \\
& & \\
w_4 \ar@{-->} @`{(-25,-45),(-25,-5)}^{1} & & w_3
}
\]
}

\bigskip

It is clear that there is only one unlabeled graph isomorphism; the one sending $v_i$ to $w_i$.
If it lifted to a labeled graph isomorphism then the $0$ label would match to the $0$ label, as it appears from $v_4$ and $w_4$.
However this does not comply with the labels on $v_0$ and $w_0$.

The two-sided subshifts coming from the languages $\La^*$ and $M^*$ are formed on a single point.
For creating a more interesting counterexample in the category of two-sided subshifts we may use the augmentations defined in $\{0,1,\ze\}$.
In this case we obtain

\smallskip

{\small
\[
\xymatrix@R=10mm@C=20mm{
v_2 \ar@{.>}_{\ze}@/_1pc/[r]  \ar@{->}_{0}@/_1pc/[dd] \ar@{-->}^{1}@/^1pc/[dd] & v_0 \ar@{.>}^{\ze}@(ul,ur) \ar@{-->}^{1}@/^1pc/[r] \ar@{->}_{0}@/_1pc/[l] & v_1 \ar@{.>}^{\ze}@/^1pc/[l] \ar@{->}^{0}@/^1pc/[dd] \ar@{-->}_{1}@/^1pc/[ddll] \\
& & \\
v_4 \ar@{.>}_{\ze}@/_1pc/[uur] \ar@{-->} @`{(-25,-45),(-25,-5)}^{1} & & v_3 \ar@{.>}^{\ze}@/^1pc/[uul]
}
\]
}

\bigskip

\noindent as the follower set graph of $\wt{\La}^*$ and

\smallskip

{\small
\[
\xymatrix@R=10mm@C=20mm{
w_2 \ar@{.>}_{\ze}@/_1pc/[r] \ar@{->}_{0}@/_1pc/[dd] \ar@{-->}^{1}@/^1pc/[dd] & w_0 \ar@{.>}^{\ze}@(ul,ur) \ar@{->}^{0}@/^1pc/[r] \ar@{-->}_{1}@/_1pc/[l] & w_1 \ar@{.>}^{\ze}@/^1pc/[l] \ar@{->}^{0}@/^1pc/[dd] \ar@{-->}_{1}@/^1pc/[ddll] \\
& & \\
w_4 \ar@{.>}_{\ze}@/_1pc/[uur] \ar@{-->} @`{(-25,-45),(-25,-5)}^{1} & & w_3 \ar@{.>}^{\ze}@/^1pc/[uul]
}
\]
}

\bigskip

\noindent as the follower set graph of $\wt{M}^*$.
Again there is not a labeled graph isomorphism between those.
\end{example}

\begin{example}\label{E:counter2}
Let the language $\La^*$ on two symbols $\{0,1\}$ be defined by the forbidden words
\[
000, 100, 010, 101, 011, 111.
\]
Then the corresponding truthTable of the Follower Set Graph Algorithm is:

\smallskip

\begin{center}
\begin{tabular}{|c|c|c|c|c|c|c||l|}
\hline
11 & 01 & 10 & 00 & 1 & 0 & $\mt$ & \\
\hline
\hline
 &  &  &  &  &  & & $\mt$ \\
\hline
 & $\qedsymbol$ & $\qedsymbol$ & $\qedsymbol$ &  &  &   & 0 \\
\hline
$\qedsymbol$ & $\qedsymbol$ & $\qedsymbol$ &  &  &  &  & 1 \\
\hline
$\qedsymbol$ & $\qedsymbol$ & $\qedsymbol$ & $\qedsymbol$ & $\qedsymbol$ & $\qedsymbol$ &  & 00 \\
\hline 
$\qedsymbol$ & $\qedsymbol$ & $\qedsymbol$ & $\qedsymbol$ &  & $\qedsymbol$ &  & 10 \\
\hline 
$\qedsymbol$ & $\qedsymbol$ & $\qedsymbol$ & $\qedsymbol$ & $\qedsymbol$ &  &  & 01 \\
\hline 
$\qedsymbol$ & $\qedsymbol$ & $\qedsymbol$ & $\qedsymbol$ & $\qedsymbol$ & $\qedsymbol$ &  & 11 \\
\hline 
\end{tabular}
\end{center}

\smallskip

\noindent and gives the vertices
\begin{align*}
& v_0 = F(\mt), v_1 = F(0), v_2 = F(1), v_3 = F(10), v_4 = F(01) \\
& v_5 = F(00) = F(11) = F(110) = F(001).
\end{align*}
Then the follower set graph of $\La^*$ is
{\small
\[
\xymatrix@R=20mm@C=25mm{
v_1  \ar@{-->}_{1}@/_1pc/[d] \ar@{->}^{0}@/^1pc/[dr] & v_0 \ar@{-->}^{1}@/^1pc/[r] \ar@{->}_{0}@/_1pc/[l] & v_2 \ar@{->}^{0}@/^1pc/[d] \ar@{-->}_{1}@/_1pc/[dl]\\
v_3  \ar@{-->}_{1}@/_1pc/[r] & v_5 & v_4 \ar@{->}^{0}@/^1pc/[l]
}
\]
}

\smallskip

\noindent On the other hand let the language $M^*$ on two symbols $\{0,1\}$ be defined by the forbidden words
\[
000, 110, 010, 101, 001, 111.
\]
Then the corresponding truthTable of the Follower Set Graph Algorithm is:

\smallskip

\begin{center}
\begin{tabular}{|c|c|c|c|c|c|c||l|}
\hline
11 & 01 & 10 & 00 & 1 & 0 & $\mt$ & \\
\hline
\hline
 &  &  &  &  &  &  & $\mt$ \\
\hline
$\qedsymbol$ & $\qedsymbol$ &  & $\qedsymbol$ &  &  &   & 0 \\
\hline
$\qedsymbol$ &  & $\qedsymbol$ & $\qedsymbol$ &  &  &  & 1 \\
\hline
$\qedsymbol$ & $\qedsymbol$ & $\qedsymbol$ & $\qedsymbol$ &  & $\qedsymbol$ & & 00 \\
\hline 
$\qedsymbol$ & $\qedsymbol$ & $\qedsymbol$ & $\qedsymbol$ & $\qedsymbol$ & $\qedsymbol$ &  & 10 \\
\hline 
$\qedsymbol$ & $\qedsymbol$ & $\qedsymbol$ & $\qedsymbol$ & $\qedsymbol$ & $\qedsymbol$ &  & 01 \\
\hline 
$\qedsymbol$ & $\qedsymbol$ & $\qedsymbol$ & $\qedsymbol$ & $\qedsymbol$ &  &  & 11 \\
\hline 
\end{tabular}
\end{center}

\smallskip

\noindent and gives the vertices
\begin{align*}
& w_0 = F(\mt), w_1 = F(0), w_2 = F(1), w_3 = F(00), w_4 = F(11) \\
& w_5 = F(10) = F(01) = F(100) = F(011).
\end{align*}
Then the follower set graph of $M^*$ is
{\small
\[
\xymatrix@R=20mm@C=25mm{
w_1  \ar@{->}_{0}@/_1pc/[d] \ar@{-->}^{1}@/^1pc/[dr] & w_0 \ar@{-->}^{1}@/^1pc/[r] \ar@{->}_{0}@/_1pc/[l] & w_2 \ar@{-->}^{1}@/^1pc/[d] \ar@{->}_{0}@/_1pc/[dl] \\
w_3  \ar@{-->}_{1}@/_1pc/[r] & w_5 & w_4 \ar@{->}^{0}@/^1pc/[l]
}
\]
}

We see that that there are two unlabeled graph isomorphisms.
The first one sends $v_i$ to $w_i$ and the second one is the composition with the reflection along the vertical line that passes through $w_0$ and $w_5$.
Both of them do not lift to a labeled graph isomorphism as the path $11$ connecting $v_1$ with $v_5$ consists of two edges with the same label whereas its image $10$ has two edges of different labels. 

The languages $\La^*$ and $M^*$ do not arise from subshifts, but once more we can use their augmentations to produce a counterexample in this class.
We thus have
{\small
\[
\xymatrix@R=20mm@C=25mm{
v_1 \ar@{.>}_{\ze}@/_1pc/[r]  \ar@{-->}_{1}@/_1pc/[d] \ar@{->}^{0}@/_1pc/[dr] & v_0 \ar@{.>}^{\ze}@(ul,ur) \ar@{-->}^{1}@/^1pc/[r] \ar@{->}_{0}@/_1pc/[l] & v_2 \ar@{.>}^{\ze}@/^1pc/[l] \ar@{->}^{0}@/^1pc/[d] \ar@{-->}_{1}@/^1pc/[dl]\\
v_3 \ar@{.>}_{\ze}@/_1pc/[ur] \ar@{-->}_{1}@/_1pc/[r] & v_5 \ar@{.>}_{\ze}[u] & v_4 \ar@{.>}^{\ze}@/^1pc/[ul] \ar@{->}^{0}@/^1pc/[l]
}
\]
}for $\wt{\La}^*$, and
{\small
\[
\xymatrix@R=20mm@C=25mm{
w_1 \ar@{.>}_{\ze}@/_1pc/[r] \ar@{->}_{0}@/_1pc/[d] \ar@{-->}^{1}@/_1pc/[dr] & w_0 \ar@{.>}^{\ze}@(ul,ur) \ar@{-->}^{1}@/^1pc/[r] \ar@{->}_{0}@/_1pc/[l] & w_2 \ar@{.>}^{\ze}@/^1pc/[l] \ar@{-->}^{1}@/^1pc/[d] \ar@{->}_{0}@/^1pc/[dl] \\
w_3  \ar@{.>}_{\ze}@/_1pc/[ur] \ar@{-->}_{1}@/_1pc/[r] & w_5 \ar@{.>}_{\ze}[u] & w_4 \ar@{.>}^{\ze}@/^1pc/[ul] \ar@{->}^{0}@/^1pc/[l]
}
\]
}for $\wt{M}^*$.
Again the follower set graphs are not labeled graph isomorphic.
\end{example}

\subsection{Irreducible two-sided sofic subshifts}

Let us examine further the case of irreducible two-sided sofic subshifts.
Recall that a subshift is called \emph{irreducible} if it has a presentation through a labeled graph $\fG = (G, \fL)$ so that $G$ is irreducible.
Among all presentations, Fischer \cite{Fis75-1, Fis75-2} has shown that there exists a minimal resolving one, that is unique up to label graph isomorphism; see also \cite[Theorem 3.3.18]{LM95}.
Minimality is taken with respect to the number of vertices of the ambient graph.
This presentation is also known as the \emph{Fischer cover} of the subshift.
Fischer covers are follower-separated; for example see \cite[Corollary 3.319]{LM95}.
Uniqueness of the Fischer cover fails for reducible sofic subshifts; Jonoska \cite{Jon95} provides such a counterexample for subshifts of finite type.

The Fischer cover can be induced by the Krieger cover or by the follower set graph.
A way to obtain it from the follower set graph is as follows.
Recall that a word $\mu$ is called \emph{intrinsically synchronizing} if:
\begin{center}
whenever $\nu \mu \in \La^*$ and $\mu \nu' \in \La^*$ then $\nu \mu \nu' \in \La^*$.
\end{center}
Then the minimal resolving presentation is the labeled subgraph of the follower set graph formed by using just the follower sets of the intrinsically synchronizing words\footnote{\
It is worth mentioning that a subshift is of finite type if and only if all sufficiently long words are intrinsically synchronizing \cite[Exercise 3.3.5]{LM95}.
This property emphasizes the value of Jonoska's \cite{Jon95} counterexample.} (see \cite[Exercise 3.3.4]{LM95}).

We will see how the graph isomorphism we get from Theorem \ref{T:unlbl iso} induces an unlabeled graph isomorphism of the Fischer covers.
Moreover that it respects the vertices labeled by follower sets of intrinsically synchronizing words.
For the latter we will require some terminology and results from \cite{LM95}.
Recall that we follow the left version of their notation.

Given a labeled graph $\fG = (G, \fL)$ we say that a path $\mu$ is \emph{synchronizing} for $\fG$ if:
\begin{center} 
all paths labeled $\mu$ terminate at the same vertex.
\end{center}
Recall that we read paths from right to left.
Suppose in addition that $\fG$ is resolving and follower separated.
In this case  every path $w$ can be extended on the left to a synchronizing path $\mu = u w$ for $\fG$ by \cite[Proposition 3.3.16]{LM95}.
Under the same assumption if $\mu$ is synchronizing for $\fG$ then every path $u \mu$ is synchronizing for $\fG$ by \cite[Lemma 3.3.15]{LM95}.
The connection between synchronizing paths and intrinsically synchronizing words is given in \cite[Exercise 3.3.3]{LM95}.
That is, if $\fG$ is the minimal resolving presentation of a two-sided irreducible sofic shift $\La$ then a path $w$ is synchronizing for $\fG$ if and only if the word $w$ is intrinsically synchronizing for $\La$.

Now we have set the context for proving the next corollary.

\begin{corollary}\label{C: irr}
Let $\La$ and $M$ be two-sided irreducible sofic subshifts.
If $\La$ and $M$ are locally piecewise conjugate then there is an unlabeled graph isomorphism between their Fischer covers. 

Furthermore, the unlabeled graph isomorphism induces a bijection between the collections
\[
\{F_\La(\mu) \mid \textup{ $\mu$ is an intrinsically synchronizing word for $\La$}\}
\]
and
\[
\{F_M(\nu) \mid \textup{ $\nu$ is an intrinsically synchronizing word for $M$}\}.
\]
\end{corollary}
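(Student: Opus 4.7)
The plan is to invoke Theorem \ref{T:unlbl iso} to obtain an unlabeled graph isomorphism $\ga_s \colon \fG_{\La^*} \to \fG_{M^*}$ of the follower set graphs, and then to restrict $\ga_s$ to the Fischer covers. The crucial step is to characterize the vertex set of the Fischer cover purely in terms of the directed graph structure of $\fG_{\La^*}$, so that $\ga_s$ respects it automatically.

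I claim that, for an irreducible sofic subshift $\La$, the vertex set of its Fischer cover coincides with the unique sink strongly connected component of $\fG_{\La^*}$. This is established in three steps. First, forward-invariance: if $\mu$ is intrinsically synchronizing and $i\mu \in \La^*$, then the path $i\mu$ is a left extension of the synchronizing path $\mu$ in the resolving, follower-separated graph $\fG_{\La^*}$, hence itself synchronizing by \cite[Lemma 3.3.15]{LM95}; equivalently, $i\mu$ is intrinsically synchronizing, so no edge exits the Fischer cover within $\fG_{\La^*}$. Second, strong connectedness follows because the Fischer cover of an irreducible subshift is itself irreducible, and thus lies inside a single strongly connected component of $\fG_{\La^*}$. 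Third, by \cite[Proposition 3.3.16]{LM95} every path in $\fG_{\La^*}$ admits a synchronizing left extension, so from every vertex of $\fG_{\La^*}$ there is a directed path into the Fischer cover. Together, these three facts place the Fischer cover inside a sink component, and irreducibility of $\La$ forces this sink component to be unique. To finish the characterization, I pick any vertex $v = F_\La(\nu)$ in the unique sink component, use reachability to follow a path $u$ from $v$ into the Fischer cover, then use strong connectedness of the sink to return by some path $u'$ back to $v$; by \cite[Lemma 3.3.15]{LM95} applied to the synchronizing path landing in the Fischer cover, the composite word $u' u \nu$ is intrinsically synchronizing and has $v$ as its follower set, so $v$ belongs to the Fischer cover.

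Once this characterization is in place the rest is automatic. Any directed graph isomorphism preserves strongly connected components and the induced DAG structure on them; in particular $\ga_s$ sends the unique sink component of $\fG_{\La^*}$ bijectively to that of $\fG_{M^*}$, and carries across the induced edges between these vertices, producing the required unlabeled graph isomorphism between the Fischer covers. The second assertion of the corollary is then immediate, since the vertex set of the Fischer cover of $\La$ is, by construction,
\[
\{F_\La(\mu) \mid \mu \text{ is intrinsically synchronizing for } \La\},
\]
and analogously for $M$, so the bijection on vertex sets produced by the restriction of $\ga_s$ is already the asserted bijection.

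The main obstacle I anticipate is the non-trivial direction of the sink-component characterization, namely the verification that every vertex of the sink component is actually a Fischer cover vertex. Without \cite[Lemma 3.3.15]{LM95} the sink component could \emph{a priori} be strictly larger than the Fischer cover; it is this lemma, combined with the reversal argument outlined above, that closes the gap.
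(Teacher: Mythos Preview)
Your approach is correct and genuinely different from the paper's. The paper does not characterize the Fischer cover graph-theoretically inside $\fG_{\La^*}$; instead it transports $\fH_\La$ through the unlabeled isomorphism to a subgraph $\fH_M \subseteq \fG_{M^*}$ and then argues in two stages: first an \emph{entropy} argument (if $X(\fH_M)\subsetneq M$ then $h(X(\fH_M))<h(M)$ by \cite[Corollary 4.4.9]{LM95}, contradicting $h(\La)=h(M)$ from Proposition~\ref{P: same size}), and second a minimality/symmetry argument to conclude $\fH_M$ is a minimal resolving presentation of $M$. Only afterwards does the paper identify the vertices of $\fH_M$ with follower sets of intrinsically synchronizing words, via essentially the same synchronizing-extension manoeuvre you use. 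Your route avoids entropy entirely by recognizing the Fischer cover as the unique sink strongly connected component of $\fG_{\La^*}$, a purely graph-theoretic invariant that any unlabeled isomorphism must respect; this is more elementary and yields an intrinsic description that may be of independent interest. One small point: in your return argument the word $u\nu$ need not itself be synchronizing merely because the path $u$ lands at a Fischer-cover vertex; you should either invoke \cite[Proposition 3.3.16]{LM95} to choose $u$ so that $u\nu$ is synchronizing, or simply take an intrinsically synchronizing word $\mu$ with $F(\mu)$ the landing vertex and use $u'\mu$ instead of $u'u\nu$.
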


\begin{proof}
Let $\fG_\La = (G_\La, \L_1)$ and $\fG_M = (G_M, \L_2)$ be the follower set graphs of $\La$ and $M$, respectively.
Let $\fH_{\La}$ be the labeled graph that remains from $\fG_{\La}$ by using only the intrinsically synchronizing words, i.e. the Fischer cover of $\La$.
The graph isomorphism of Theorem \ref{T:unlbl iso} then gives an isomorphism of the ambient graph $H_\La$ of $\fH_\La$ onto a subgraph $H_M$ of $G_{M}$.
Let  $\fH_{M}$ be the labeled subgraph induced by $H_M$ inside $\fG_M$.
Notice that both $\fH_\La$ and $\fH_M$ are resolving as subgraphs of the resolving follower set graphs and thus
\[
h(X(\fH_\La)) = h(X(H_\La)) \qand h(X(\fH_M)) = h(X(H_M))
\]
by \cite[Proposition 4.13]{LM95}.

First we claim that $\fH_M$ gives a presentation of $M$.
If $X(\fH_M) \neq M$ then $X(\fH_M)$ is a proper subshift of $M$ and therefore  \cite[Corollary 4.4.9]{LM95} yields
\[
h(X(\fH_M)) < h(M).
\]
Since graph isomorphism of $H_\La$ with $H_M$ respects entropy we get that
\begin{align*}
h(\La) = h(X(\fH_\La)) = h(X(H_\La)) = h(X(H_M)) = h(X(\fH_M)) < h(M).
\end{align*}
However this contradicts $h(M) = h(\La)$ of Proposition \ref{P: same size}.
Hence $\fH_M$ is a presentation of $M$.
Notice here that the number of vertices of $\fH_M$ coincides with the minimal number of vertices required to describe $\La$, i.e.
\[
|(\fH_M)^{(0)}| = |(H_M)^{(0)}| = |(H_\La)^{(0)}| = |(\fH_\La)^{(0)}|.
\]

Secondly we claim that $\fH_M$ is minimal for $M$ with respect to the number of vertices.
Otherwise we could find a subgraph $\fH'_M$ of $\fG_M$ on less vertices than that of $\fH_M$.
Then the unlabeled graph isomorphism would carry over, as above, to a presentation $\fH'_\La$ of $\La$, giving
\begin{align*}
|(\fH'_\La)^{(0)}| = |(\fH'_M)^{(0)}| < |(\fH_M)^{(0)}| = |(\fH_\La)^{(0)}|.
\end{align*}
However this contradicts minimality of $\fH_\La$ for $\La$, and thus $\fH_M$ is a minimal presentation of $M$.

So far we have proved that $\fH_M$ is a minimal resolving presentation of $M$.
Initially $\fH_M$ is isomorphic to the subgraph of $\fG_M$ obtained by using the vertices labeled by follower sets of intrinsically synchronizing words.
We will show now that these two sub-graphs of $\fG_M$ are actually equal.
Due to minimality it suffices to show that the vertices of $\fH_M$ correspond to follower sets of intrinsically synchronizing words.

We will use that $\fH_M$ is follower-separated and irreducible as a minimal presentation of $M$ \cite[Corollary 3.3.19]{LM95}.
Let $F(w)$ be a vertex of $\fH_M$.
We will show that
\[
\textup{$\exists$ an intrinsically synchronizing word $\nu \in M^*$ such that $F(w) = F(\nu)$.}
\]
Consider a labeled path $u_1$ in $\fH_M$ starting at a vertex $J$ and extend it to a synchronizing path $u_2 u_1$ in $\fH_M$.
That is, all paths labeled $u_2 u_1$ end at the same vertex, say $J'$ in $\fH_M$.
By irreducibility of $\fH_M$ there is a path $u_3$ connecting $J'$ with $F(w)$ so that the path $u_3 u_2 u_1$ is allowable in $\fH_M$.
Moreover this is an extension of a synchronizing path and thus it is a synchronizing path in $\fH_M$, i.e. all paths in $\fH_M$ representing $u_3 u_2 u_1$ end at $F(w)$.
Minimality of $\fH_M$ implies that $u_3 u_2 u_1$ is an intrinsically synchronizing word for $M$. 
As $\fH_M$ represents $M$, the follower set of $u_3 u_2 u_1$ in $M$ coincides with the paths in $\fH_M$ starting at the vertex $F(w)$.
However the collection of these paths is exactly the set $F(w)$ due to the follower set graph construction.
Hence we conclude that $F(w) = F(u_3 u_2 u_1)$ and the proof is complete.
\end{proof}


\begin{remark}\label{R: intr mix}
Referring to the proof of Corollary \ref{C: irr}, we do not claim a direct bijection between intrinsically synchronizing words.
It is unclear whether the function of Proposition \ref{P: same size} respects this property.
The main obstacle is that the word
\[
f_n^{-1}(f_{|\nu|}(\nu) \mu f_{|\nu'|}(\nu')) \qfor n = |\nu| + |\mu| + |\nu'|
\]
gives a word $\nu'' \mu' \nu'$ instead of $\nu \mu' \nu'$ (as indicated in Example \ref{E: even vs sft}).
A similar obstacle does not allow checking directly whether the mixing property is also preserved under local piecewise conjugacy.
\end{remark}

\begin{example}\label{E: even vs sft irr}
We require both $\La$ and $M$ be irreducible in Corollary \ref{C: irr}.
This is because local piecewise conjugacy does not preserve irreducibility.
For an example recall the even shift and the subshift of finite type constructed in Example \ref{E: even vs sft}.
The even shift $\La$ is irreducible and the subgraph
{\small
\[
\xymatrix@R=30pt{
F(1) \ar@{-->}_{1}@(ul,l) \ar@{->}^{0}@/^1pc/[rrrr] & & & & F(01) \ar@{->}^{0}@/^1pc/[llll]
}
\]
}gives its Fischer cover.
However the subshift $M$ on $\{001\}$ is not irreducible.
Any subgraph on less vertices produces a proper subshift of $M$.
In particular the unlabeled graph isomorphism from the Fischer cover of $\La$ to $M$ produces the irreducible subgraph
{\small
\[
\xymatrix@R=30pt{
F(1) \ar@{-->}_{1}@(ul,l) \ar@{->}^{0}@/^1pc/[rrrr] & & & & F(01) \ar@{-->}^{1}@/^1pc/[llll]
}
\]
}which does not represent $0^\infty \in M$.
\end{example}

\begin{example}
The converse of Corollary \ref{C: irr} does not hold.
That is, irreducible subshifts that are not locally piecewise conjugate may have Fischer covers that admit an unlabeled graph isomorphism.

For a counterexample let $M$ be defined by the forbidden word $\{00\}$.
Then the truthTable of $M$ is

\smallskip

\begin{center}
\begin{tabular}{|c|c|c|c|c|c|c||l|}
\hline
1 & 0 & $\mt$ & \\
\hline
\hline
 &  &  & $\mt$ \\
\hline
 & $\qedsymbol$  &  & 0 \\
\hline
 &  &  & 1 \\
\hline
\end{tabular}
\end{center}

\smallskip

\noindent and thus its follower set graph is given by
{\small
\[
\xymatrix@R=30pt{
v_0 \ar@{-->}_{1}@(ul,l) \ar@{->}^{0}@/^1pc/[rrrr] & & & & v_1 \ar@{-->}^{1}@/^1pc/[llll]
}
\]
}where $v_0 = F(\mt) = F(1)$ and $v_1 = F(0)$.
Consequently $M$ is irreducible (having one irreducible presentation) and its Fischer cover coincides with its follower set graph.
Comparing with the even shift $\La$ we see that the Fischer covers of $\La$ and $M$ admit an unlabeled graph isomorphism, but this does not hold for their follower set graphs.
\end{example}


\end{document}